\documentclass[12pt]{article}

\usepackage{amsmath}
\usepackage{amssymb}
\usepackage{amsthm}
\usepackage{mathabx}
\usepackage[utf8]{inputenc}
\usepackage[T1]{fontenc}
\usepackage[english]{babel}
\usepackage[top=4cm, bottom=4cm, left=3cm, right=3cm]{geometry}
\usepackage{tikz}
\usepackage{multirow}
\usepackage{hyperref}
\usepackage{cleveref} 
\usepackage{paralist}
\graphicspath{ {figures/} }

\tikzstyle boundaries=[color=gray, very thin]
\tikzstyle entrance path=[color=black, very thick]
\tikzstyle pipes=[very thick, rounded corners=5pt]

\newtheorem{theorem}{Theorem}[section]
\newtheorem{algo}[theorem]{Algorithm}
\newtheorem{prop}[theorem]{Proposition}
\newtheorem{lemma}[theorem]{Lemma}

\newtheorem{defi}[theorem]{Definition}

\theoremstyle{remark}
\newtheorem{ex}{Example}[section]
\newtheorem{rem}[theorem]{Remark}

\newcommand{\rc}{\mathbf{R}}
\DeclareMathOperator{\lin}{lin}
\DeclareMathOperator{\SC}{SC}
\DeclareMathOperator{\inv}{inv}

\DeclareMathOperator{\ninv}{ninv}
\DeclareMathOperator{\ins}{ins}
\DeclareMathOperator{\id}{id}

\newcommand{\R}{\mathbb{R}} 
\newcommand{\Z}{\mathbb{Z}} 
\newcommand{\fS}{\mathfrak{S}} 

\newcommand{\less}{\vartriangleleft} 
\newcommand{\more}{\vartriangleright} 

\newcommand{\pos}{^{-1}}

\newcommand{\contact}{^\#} 
\newcommand{\econtact}[1]{#1^\natural}
\newcommand{\pstart}[1]{\mathcal{S}_{#1}}
\newcommand{\pend}[1]{\mathcal{E}_{#1}}
\newcommand{\paths}[1]{\mathcal{#1}}
\newcommand{\ftop}{^\uparrow} 
\newcommand{\fbot}{^\downarrow} 

\newcommand{\cross}{\begin{tikzpicture}[scale=0.3] \draw[pipes] (0.5,0) -- (0.5,1); \draw[pipes] (0,0.5) -- (1,0.5); \end{tikzpicture}}
\newcommand{\elbow}{\begin{tikzpicture}[scale=0.3] \draw[pipes] (0.5,0) -- (0.5,0.5) -- (1,0.5); \draw[pipes] (0,0.5) -- (0.5,0.5) -- (0.5,1); \end{tikzpicture}}
\newcommand{\nwelbow}{\begin{tikzpicture}[scale=0.3] \draw[pipes] (0,0.5) -- (0.5,0.5) -- (0.5,1); \end{tikzpicture}}
\newcommand{\seelbow}{\begin{tikzpicture}[scale=0.3] \draw[pipes] (0.5,0) -- (0.5,0.5) -- (1,0.5); \end{tikzpicture}}
\newcommand{\zone}[1]{\mathcal{Z}_{#1}}
\newcommand{\vertline}[1]{\Delta^{\updownarrow}(#1)}
\newcommand{\horline}[1]{\Delta^{\leftrightarrow}(#1)}

\title{Lattice properties of acyclic alternating pipe dreams}
\author{N. Cartier}

\begin{document}

\maketitle

\begin{abstract}
This paper proves some conjectures raised by N. Bergeron, N. Cartier, C. Ceballos, and V. Pilaud about acyclic facets of subword complexes of type A Coxeter groups. We use a representation of subword complex facets extending the pipe dreams defined N. Bergeron and S. Billey and prove that the strongly acyclic facets realize a lattice quotient of a weak order interval. We also give a sufficient condition so that all acyclic facets are strongly acyclic.
\end{abstract}

\tableofcontents

\section*{Introduction}

Pipe dreams were introduced by N. Bergeron and S. Billey in \cite{BergeronBilley} as objects indexing the monomials of Schubert polynomials. There were later extended to a wider framework in the context of Gröbner geometry by A. Knutson and E. Miller and generalized to subword complexes on Coxeter groups \cite{KnutsonMiller-subwordComplex,KnutsonMiller-GroebnerGeometry}. The structure of subword complexes as simplical complexes and the flip order defined on them have been extensively studied (see for example \cite{PilaudStump-ELlabeling}).

As observed by A. Woo in \cite{Woo}, the sets of reversing pipe dreams (triangular pipe dreams with well-chosen exit permutations) belong to the Catalan families; moreover, the flip order on them is isomorphic to the Tamari lattice on binary tree. This lattice is a quotient of the weak order on permutations, with the insertion into binary search trees being a lattice morphism from the weak order to the Tamari lattice \cite{HivertNovelliThibon-algebraBinarySearchTrees}. This allowed V. Pilaud and F. Santos to find a realization of the associahedron as the Brick polytope of a subword complex \cite{PilaudSantos-brickPolytope}. With a slight variations on the exit permutation, C. Stump found a similar link between other pipe dreams sets and multitriangulations \cite{Stump}, thus prompting N. Bergeron, the author, C. Ceballos and V. Pilaud to study the relation between the weak order and all sets of triangular pipe dreams \cite{CartierPilaud}. They found that acyclic triangular pipe dreams define quotients of the weak order, and conjectured that this result was true for a large family of subword complexes on all Coxeter group.

The goal of the present paper is to prove this conjecture in type A. This is done by representing the facets of the relevant subword complexes by pipe dreams on a well-chosen class of shapes. Those shapes include the classical triangular shape as well as the shapes used by V. Pilaud \cite{Pilaud-brickAlgebraArxiv} to realize cambrian lattices with pipe dreams.

In \cref{sec:definitions}, we extend the definition of pipe dreams to draw them on what we call alternating shapes. This generalizes the usual pipe dreams on a triangular shape, but also the Cambrian twists defined in \cite{Pilaud-brickAlgebraArxiv} (in the preprint version only), shown to be isomorphics to the cambrian lattices introduced by N. Reading \cite{Reading-CambrianLattices}.  This also gives a graphical representation of the subword complexes described in conjectures 5.20 and 5.22 to 5.25 
in \cite{CartierPilaud}. We give a few basic properties of those pipe dreams and of their contact graphs.

In \cref{sec:lattice}, we choose any alternating shape~$F$ and permutation~$\omega$ and study the linear extensions of the contact graphs of the pipe dreams drawn on~$F$ and with exit permutation~$\omega$. We use a result of \cite{CartierPilaud} to prove that they define a partition of a set containing the weak order interval~$[\id, \omega]$; we then prove that the equivalence relation defined by this partition is a lattice congruence of this interval. Finally, we define and study the map from~$[\id, \omega]$ to those pipe dreams such that the fiber above a pipe dream contains its linear extensions: we describe the image of the weak order and we give two algorithms computing the image of any~$\pi \in [\id, \omega]$.

Last, in \cref{sec:cshapes}, we give a sufficient condition on the alternating shape~$F$ so that this map that we defined is surjective on all acyclic pipe dreams on~$F$, independently of the chosen exit permutation~$\omega$. We prove this by reasoning on alternating shapes and pipe dreams, but this result is also a consequence of the much more general Theorem 3.1 of \cite{JahnStump}.

\section{Definitions}

\label{sec:definitions}

\subsection{Weak order on permutations}

Let~$n \geqslant 2$ be an integer and~$\omega \in \fS_n$ a permutation. For any~$1 \leqslant a < b \leqslant n$, the pair~$(a,b)$ is a (right) \emph{inversion} of~$\omega$ if~$\omega\pos(b) < \omega\pos(a)$ and a (right) \emph{noninversion} otherwise. We denote by~$\inv(\omega)$ the set of inversions of~$\omega$ and by~$\ninv(\omega)$ the set of noninversions of~$\omega$. We note that~$\ninv(\omega) \sqcup \inv(\omega) = \{ (a,b) \mid 1\leqslant a < b \leqslant n \}$. The \emph{length} of~$\omega$, denoted by~$\ell(\omega)$, is the number of inversions of~$\omega$.

The (right) \emph{weak order} on permutations is the order given by the inclusion of inversion sets: for any~$\omega_1, \omega_2 \in \fS_n$, we have that~$\omega_1 \leqslant \omega_2$ if and only if~$\inv(\omega_1) \subseteq \inv(\omega_2)$, or equivalently~$\ninv(\omega_2) \subseteq \ninv(\omega_1)$. The covers of this order are all in the form~$UabV \lessdot UbaV$ with~$U$ and~$V$ two sequences of integers and~$1 \leqslant a < b \leqslant n$, or~$\omega \lessdot \omega \tau_i$ when~$\ell(\omega) < \ell(\omega\tau_i)$ for some~$1 \leqslant i < n$ (with~$\tau_i$ the simple transposition~$(i,i+1)$).

Any comparison between two permutations in this paper will be in the right weak order.

\subsection{Alternating shapes and pipe dreams}

Consider the cartesian grid on~$\R^2$, whose cells are the~$1 \times 1$ squares with all their corners in~$\Z^2$. We index those cells by the coordinates of their lower left corner. We describe paths on this grid, i.e.\ sequences of points such that two consecutive points are at distance~1, by giving their starting point and the direction of each step:~$N$,~$S$,~$E$ and~$W$ represent respectively a step north, south, east and west. For a path~$\paths{P}$, we denote by~$|\paths{P}|$ its number of steps or length, and for~$d \in \{ N,S,E,W \}$ a direction we denote by~$|\paths{P}|_d$ the number of steps in that direction in~$\paths{P}$.

\begin{defi}
An alternating shape $F$ is a connected collection of cells of the cartesian grid whose boundary can be divided in four parts as follows for some integer~$n > 0$:

\begin{compactitem}
\item a \emph{starting path}~$\pstart{F}$ from~$(0,0)$ to~$(|\pstart{F}|_E, -|\pstart{F}|_S)$ with~$n$ steps~$S$ or~$E$;
\item a \emph{NW stair path} from~$(0,0)$ to~$(t_F,t_F)$ with steps~$(NE)^{t_F}$ for some~$t_F \geqslant 0$;
\item an \emph{ending path}~$\pend{F}$ from~$(t_F,t_F)$ to~$(t_F+|\pend{F}|_E, t_F - |\pend{F}|_S)$ with~$n$ steps~$S$ or~$E$;
\item a \emph{SE stair path} from~$(|\pstart{F}|_E, -|\pstart{F}|_S)$ to~$(t_F+|\pend{F}|_E, t_F - |\pend{F}|_S)$ with steps~$(EN)^{b_F}$ for some~$b_F \geqslant 0$.
\end{compactitem}
For $F$ to be connected, $\pend{F}$ must stay strictly north and east of $\pstart{F}$ all along.
\end{defi}

We note that since $|\pstart{F}|_S + |\pstart{F}|_E = |\pend{F}|_S + |\pend{F}|_E = n$, the end points of $\pstart{F}$ at coordinates $(|\pstart{F}|_E, -|\pstart{F}|_S)$ and of $\pend{F}$ at coordinates $(t + |\pend{F}|_E, t - |\pend{F}|_S)$ are on the same diagonal of equation $x-y = n$ and so can be joined by a stair path alternating steps $E$ and $N$.

An example of such a shape for~$n=5$ is given on the left of \cref{ex:shape}, with its starting and ending paths are bolded and in red. The second and third shapes are not alternating shapes: the northwest part of the second one is not a stair path, and the northeast part of the third one has an~$N$ step.

\begin{figure}
\begin{center}
\includegraphics[scale=1]{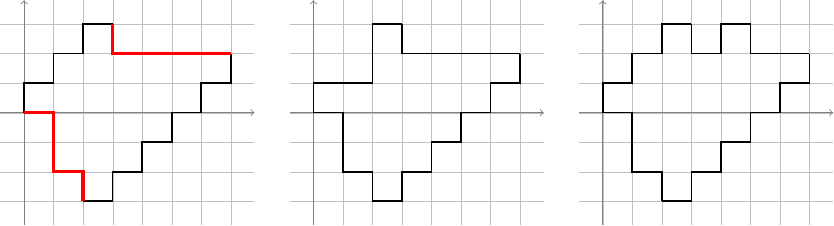}
\end{center}
\caption{An alternating shape and two counter examples.}
\label{ex:shape}
\end{figure}

\begin{defi}
A \emph{pipe dream}~$P$ on an alternating shape~$F$ is a filling of the cells in~$F$ with crosses~\cross{} and contacts~\elbow{} such that each pipe entering on the starting path of~$F$ exit on the ending path. We label the pipes from~1 to~$n$ following the staring path from north-west to south-east, and we say that the order of their exit points on the ending path (still from north-west to south-east) is the \emph{exit permutation} of~$P$. For example, the pipe dreams in \cref{ex:reduced} have exit permutation~$51324$. A permutation~$\omega \in \fS_n$ is \emph{sortable} on~$F$ if there exists at least one pipe dream on~$F$ with exit permutation~$\omega$.
\end{defi}

We will only consider \emph{reduced} pipe dreams, where any two pipes cross at most once, and for any~$\omega \in \fS_n$ sortable on~$F$ we denote by~$\Pi_F(\omega)$ the set of reduced pipe dreams on~$F$ with exit permutation~$\omega$. The crosses in a reduced pipe dreams are in bijection with the inversions of its exit permutation. We note that since we can reduce any non-reduced pipe dream by replacing pairs of crosses between two pipes by pairs of contacts without changing its exit permutation, as illustrated in \cref{ex:reduced}, a permutation~$\omega$ is sortable on~$F$ if and only if~$\Pi_F(\omega)$ is nonempty. 

\begin{figure}
\begin{center}
\includegraphics[scale=1]{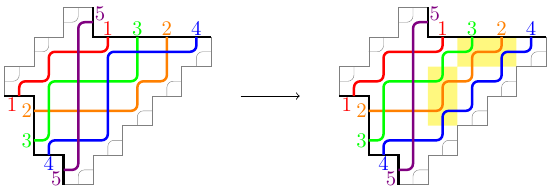}
\end{center}
\caption{A non-reduced pipe dream and its reduced counterpart.}
\label{ex:reduced}
\end{figure}

\begin{rem}\label{rem:n-shapes_alternating}
As was said in the introduction, this definition of pipe dreams generalizes the original definition given in \cite{KnutsonMiller-GroebnerGeometry}. The triangular shape used there is the alternating shape obtained by the starting path having only~$S$ steps, the ending path having only~$E$ steps, the NW stair path having length~0 and the SE stair path length~$2n$.

More generally, reduced pipe dreams on alternating shapes represent the facets of subword complexes in the Coxeter group~$A_{n-1}$ such that the base word is \emph{alternating}, i.e.\ such that between two occurrences of the same letter~$s$, all the letters that do not commute with~$s$ appear. A base word~$Q_F$ represented by an alternating shape~$F$ can be found by reading the shape from south-west to north-east and adding the simple transposition~$\tau_{x-y}$ to~$Q_F$ when encountering the cell~$(x,y)$ of~$F$. Conversely, one can prove that any alternating word~$Q$ on the simple transpositions of~$A_{n-1}$ is represented by an alternating shape.
\end{rem}

Let us now consider~$P \in \Pi_F(\omega)$. A contact~$c$ of~$P$ is \emph{flippable} if the two pipes passing through~$c$ have a crossing~$x$. In that case, the \emph{flip} on~$c$ exchanges the contact~$c$ and the crossing~$x$, and gives a new pipe dream~$P'$ taht is also in~$\Pi_F(\omega)$. The flip is \emph{increasing} if~$c$ is south-west of~$x$ and \emph{decreasing} otherwise. An increasing flip is illustrated in \ref{ex:flip}.

\begin{figure}
\begin{center}
\includegraphics[scale=1]{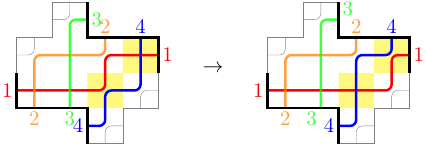}
\caption{An increasing flip between two reduced pipe dreams.}
\label{ex:flip}
\end{center}
\end{figure}

The \emph{increasing flip graph} on~$\Pi_F(\omega)$ is the directed graph whose edges are the increasing flips. It is clearly acyclic and it has a unique source and a unique sink, respectively called the \emph{antigreedy} and \emph{greedy} pipe dreams. The \emph{increasing flip poset} on~$\Pi_F(\omega)$ is the reflexive and transitive closure of the increasing flip graph.

\begin{rem}\label{rem:is_SC}
The increasing flip order on~$\Pi_F(\omega)$ is equivalent to the flip order on the subword complex~$\SC(Q_F,\omega)$ with~$Q_F$ defined in \cref{rem:n-shapes_alternating}.
\end{rem}


Let us now introduce some notations to describe the trajectory of pipes in a pipe dream.

\begin{defi}\label{def:pipes_area}
Let~$F$ be an alternating shape and~$P$ a pipe dream on~$F$. For any pipe~$p$ of~$P$:
\begin{compactitem}
\item $(x_p^s, y_p^s)$ the \emph{starting coordinates} of~$p$ point to the SW corner of its first cell;
\item $(x_p^e, y_p^e)$ the \emph{ending coordinates} of~$p$ point to the NE corner of its last cell.
\end{compactitem}

We say that the rectangle that has~$(x_p^s,y_p^s)$ as its southwest corner and~$(x_p^e,y_p^e)$ as its northeast corner is the \emph{zone} of~$p$, denoted by~$\zone{p}$.
\end{defi}

\begin{rem} \label{rem:zones}
The area of~$p$ contains the entire trajectory of~$p$: any cell crossed by~$p$ must be in it. Since it contains at least the first cell of~$p$, we know that~$x_p^s < x_p^e$ and~$y_p^s < y_p^e$. We also note that since the pipes start along the starting path of~$F$ in increasing order from northwest to southeast, for any~$1 \leqslant a < b \leqslant n$ we know that~$0 \leqslant x_a^s \leqslant x_b^s \leqslant |\pstart{F}|_E$ and~$0 \geqslant y_a^s \geqslant y_b^s \geqslant -|\pstart{F}|_S$. Similarly, since the pipes end along the ending path of~$F$ from northwest to southeast in the order of the exit permutation~$\omega$, we have~$t_F \leqslant x_{\omega(a)}^e \leqslant x_{\omega(b)}^e \leqslant t_F + |\pend{F}|_E$ and~$t_F - |\pend{F}|_S \leqslant y_{\omega(b)}^e \leqslant y_{\omega(a)}^e \leqslant t_F$.
\end{rem}

\begin{lemma}\label{lem:start_end_coord}
For~$F$ an~alternating shape and~$P$ a pipe dream on~$F$ with exit permutation~$\omega$,
\begin{compactitem}
\item $x_p^s - y_p^s = p$ if~$p$ starts horizontally and~$p-1$ otherwise;
\item $x_p^e - y_p^e = \omega\pos(p)$ if~$p$ ends vertically and~$\omega\pos(p)-1$ otherwise.
\end{compactitem}
\end{lemma}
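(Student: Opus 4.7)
The plan is to walk along the two boundary paths $\pstart{F}$ and $\pend{F}$, exploiting the following elementary invariant: on any path made of $E$ steps (increasing $x$ by $1$) and $S$ steps (decreasing $y$ by $1$), the quantity $x - y$ increases by exactly $1$ at each step. Starting from $(0,0)$ along $\pstart{F}$, this gives $x - y = k$ after $k$ steps; starting from $(t_F, t_F)$ along $\pend{F}$, it also gives $x - y = k$ after $k$ steps. Both halves of the lemma will then reduce to a short case analysis on whether the relevant boundary step is $E$ or $S$.

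For the starting coordinates, I would first note that the labeling convention for pipes forces pipe $p$ to enter $F$ through the $p$-th step of $\pstart{F}$. I then let $(x, y)$ denote the position reached after the first $p-1$ steps of $\pstart{F}$, so that $x - y = p - 1$, and split into two cases. If the $p$-th step is $E$, the first cell of pipe $p$ is the cell lying above the corresponding horizontal edge, whose SW corner is $(x, y)$; the pipe enters this cell from below, so it starts vertically, and $x_p^s - y_p^s = p - 1$. If the $p$-th step is $S$, the first cell of pipe $p$ lies to the east of the corresponding vertical edge, with SW corner $(x, y-1)$, so the pipe starts horizontally and $x_p^s - y_p^s = p$.

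For the ending coordinates the approach is identical. Since the exit permutation is $\omega$, the pipe crossing the $k$-th step of $\pend{F}$ is $\omega(k)$, so pipe $p$ exits through step $\omega\pos(p)$ of $\pend{F}$. Writing $(x, y)$ for the position reached after $\omega\pos(p) - 1$ steps of $\pend{F}$, one has $x - y = \omega\pos(p) - 1$. A case analysis then yields: if the step is $E$, the last cell lies below the horizontal edge with NE corner $(x+1, y)$ and the pipe exits vertically, giving $x_p^e - y_p^e = \omega\pos(p)$; if the step is $S$, the last cell lies to the west with NE corner $(x, y)$ and the pipe exits horizontally, giving $x_p^e - y_p^e = \omega\pos(p) - 1$.

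The only subtlety worth flagging is purely bookkeeping: the asymmetric conventions (SW corner for starting coordinates, NE corner for ending coordinates) combined with the fact that $F$ lies to the northeast of $\pstart{F}$ but to the southwest of $\pend{F}$ force one to be careful when identifying which cell is adjacent to a given boundary step and from which side the pipe crosses it. Once this is pinned down the result follows from the invariant with no further work; in particular, the lemma depends only on how pipes meet the boundary, not on the crossings and contacts in the interior of $P$.
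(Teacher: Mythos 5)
Your proof is correct and follows essentially the same route as the paper's: both track the invariant $x-y=k$ after $k$ steps along $\pstart{F}$ (from $(0,0)$) and $\pend{F}$ (from $(t_F,t_F)$), then do a case analysis on whether the $p$-th (resp.\ $\omega\pos(p)$-th) step is $E$ or $S$ to identify the SW corner of the first cell and the NE corner of the last cell. Your explicit bookkeeping of which cell sits on which side of each boundary edge is, if anything, slightly more careful than the paper's.
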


\begin{proof}
We denote the coordinates of the integer points of~$\pstart{F}$ from northwest to southeast by~$(x_0,y_0)$, $(x_1,y_1), \ldots, (x_n,y_n)$ and know that~$x_0=y_0=0$; since this path is made of south and east steps, for each~$0 \leqslant i < n$ we have either~$x_{i+1} = x_i$ and~$y_{i+1} = y_i - 1$, or~$x_{i+1} = x_i +1$ and~$y_{i+1} = y_i$. Since~$x_0-y_0 = 0$, this proves that for all~$i$ we have~$x_i - y_i = i$. Then for~$1 \leqslant p \leqslant n$, pipe~$p$ starts on the~$p$-th step of~$\pstart{F}$ which is between points~$(x_{i-1},y_{i-1})$ and~$(x_i,y_i)$. The cell in which~$p$ starts is then indexed by its southwest corner, so~$(x_{i-1},y_{i-1})$ if the~$p$-th step goes east and~$(x_i,y_i)$ if it goes south. The first case corresponds to~$p$ starting vertically and the second case to~$p$ starting vertically.

The second part of the lemma is similar: if~$(x'_i,y'_i)$ is the~$i$-th integer point of~$\pend{F}$ (indexed from~$0$ to~$n$) then since~$x'_0 = y'_0 = t_F$ and~$\pend{F}$ is made of south and east steps, we have~$x'_i - y'_i = i$ for each~$i$. Then since pipe~$p$ ends on the~$\omega\pos(p)$-th step of~$\pend{F}$ and its ending coordinates point to the northeast corner of its last cell, depending on the direction of that~$\omega\pos(p)$-th step (and so the ending direction of pipe~$p$) the ending coordinates of~$p$ are either~$(x'_{\omega\pos(p)-1},y'_{\omega\pos(p)-1})$ or~$(x'_{\omega\pos(p)},y'_{\omega\pos(p)})$, thus concluding the proof.
\end{proof}

\subsection{Contact graphs}

We will now define the contact graph of a pipe dream, which is the specification to our context of the contact graph of a pseudoline arrangement first introduced in \cite{PilaudSantos-brickPolytope}.

\begin{defi}
For~$P$ a pipe dream on an alternating shape~$F$ with exit permutation~$\omega \in \fS_n$, the \emph{contact graph} of~$P$ is the directed graph~$P\contact$ that has a vertex for each pipe of~$P$ and contains the arc~$(a,b)$ if and only if a contact
\begin{tikzpicture}[scale=0.5, baseline=4pt]
\draw[pipes,red] (0,0.5) node[left] {a} -- (0.5,0.5) -- (0.5,1);
\draw[pipes,blue] (0.5, 0) -- (0.5,0.5) -- (1,0.5) node[right] {b};
\end{tikzpicture}
appears somewhere in~$P$. The \emph{extended contact graph} of~$P$, denoted by~$\econtact{P}$, is obtained from~$P\contact$ by adding the missing arcs~$(a,b)$ such that~$a < b$ and~$\omega\pos(a) < \omega\pos(b)$ (i.e.~$(a,b)$ is a noninversion of~$\omega$). A pipe dream~$P$ is \emph{acyclic} if~$P\contact$ is acyclic and \emph{strongly acyclic} if~$\econtact{P}$ is acyclic.
\end{defi}

The pipe dream given in \cref{ex:bad_fiber} is acyclic but not strongly acyclic, as seen in its associated contact graph and extended contact graph.

\begin{figure}
\begin{center}
\includegraphics[scale=1]{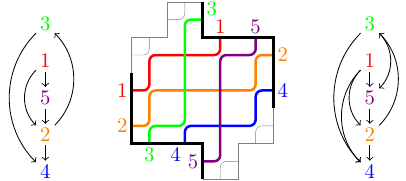}
\caption{The contact graph and extended contact graph of a pipe dream.}
\label{ex:bad_fiber}
\end{center}
\end{figure}

For~$F$ an alternating shape and~$\omega \in \fS_n$ sortable on~$F$, we denote by~$\Sigma_F(\omega)$ the set of reduced, strongly-acyclic pipe dreams on~$F$ with exit permutation~$\omega$. For any pipe dream~$P$ in that set, we denote by~$\less_P$ the transitive closure of~$\econtact{P}$ on~$[n]$ and by~$\lin(P)$ the linear extensions of~$\econtact{P}$.

Let us now give our first two lemmas on contact graphs.

\begin{lemma}\label{lem:elbows_rectangle}
Let $F$ be an alternating shape, $P$ a reduced pipe dream on $F$ and~$p,q$ two pipes of~$P$. Suppose that pipe~$p$ has an elbow in a cell~$(x_p,y_p)$ weakly northwest of an elbow of~$q$ in a cell~$(x_q,y_q)$. If any of the following three conditions is met, then there is a directed path from~$p$ to~$q$ in~$P\contact$:
\begin{compactenum}
\item the cells~$(x_p,y_q)$ and~$(x_q,y_p)$ (the other corners of the rectangle between the two elbows) are in~$F$;
\item there exists~$(x_1,y_1)$ and~$(x_2,y_2)$ cells of~$F$ such that the two elbows are both north-east of~$(x_1,y_1)$ and south-west of~$(x_2,y_2)$;
\item $(x_p,y_p) \in \zone{q}$.
\end{compactenum}
\end{lemma}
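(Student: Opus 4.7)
I would prove all three conditions by a common induction on $N = (x_q - x_p) + (y_p - y_q)$, the distance between the two elbows. The main structural tool is that alternating shapes are HV-convex: each row and each column of $F$ forms a contiguous interval of cells, a direct consequence of the monotone/stair boundaries. The base case $N = 0$ is immediate: the two elbows coincide, so $p$ and $q$ are the two pipes at a common cell and, with the convention that $p$ is the west-entering pipe of this elbow, the contact arc $(p,q)$ is present in $P\contact$ by definition.

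For the inductive step, I would trace pipe $p$ from its elbow at $(x_p, y_p)$. Pipe $p$ exits in one of two directions; assume east (the north case being symmetric). Then $p$ traverses cells along row $y_p$ through crossings until it reaches an elbow $c'$ at some cell $(x_r, y_p)$ where it turns north. The other pipe $r$ at $c'$, which enters from the south, yields the contact arc $(p, r)$ in $P\contact$. Since $(x_r, y_p)$ is weakly northwest of $(x_q, y_q)$ with strictly smaller $N$, I would apply the induction hypothesis to $(r, q)$ and concatenate the resulting directed path with $(p, r)$ to conclude $p \to q$.

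For this induction to go through, two conditions must hold: (i) each cell pipe $p$ traverses east of its elbow is in $F$, so the pipe dream is defined there; and (ii) the inductive hypothesis applies to $(r, q)$ under one of the three conditions of the lemma. Under condition (1), HV-convexity combined with the four corners being in $F$ gives that the full rectangle $[x_p, x_q] \times [y_q, y_p]$ lies in $F$, immediately giving both (i) and (ii) for the sub-rectangle between $(x_r, y_p)$ and $(x_q, y_q)$. Condition (3) reduces to condition (2) by taking $(x_1, y_1) = (x_q^s, y_q^s)$ and $(x_2, y_2) = (x_q^e - 1, y_q^e - 1)$, the first and last cells of pipe $q$ (which lie in $F$), and observing that $(x_p, y_p), (x_q, y_q) \in \zone{q}$ frames both elbows between these cells. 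Condition (2) is then handled by applying HV-convexity to the rows and columns spanned by the framing cells together with the elbow cells, and the framing $(x_1, y_1), (x_2, y_2)$ is preserved for the sub-problem.

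The main obstacle is the edge case in the inductive step where pipe $p$ makes no elbow in row $y_p$ before exiting the rectangle to the east; this requires a symmetric argument tracing pipe $q$ backward from its elbow (either westward or southward) to find an intermediate pipe via its first earlier elbow. A secondary subtlety is the reduction for condition (2): one must carefully chain HV-convexity applications across the framing cells and the elbow cells to ensure all traversed cells lie in $F$.
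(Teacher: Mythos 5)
Your overall strategy matches the paper's: induct on the grid distance between the two elbow cells, recurse through an intermediate contact, and reduce conditions (2) and (3) to condition (1) by showing the missing corner cells lie in $F$ (condition (3) via the first and last cells of $q$, exactly as the paper does). But there is a genuine gap at the point you yourself flag as ``the main obstacle'': you never rule out the situation where \emph{neither} traced pipe turns inside the rectangle. If $p$ runs straight east through $(x_q,y_p)$ with no elbow, you propose tracing $q$ backward to ``its first earlier elbow'', but that elbow may equally well lie outside the rectangle, and then the induction has nothing to recurse on. This is precisely where the hypothesis that $P$ is \emph{reduced} must be used, and your proposal never invokes reducedness. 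The paper's resolution is to pass to the companion pipes of the two contacts and trace each in \emph{both} directions: the pipe $p'$ carrying the southeast half of the contact at $(x_p,y_p)$ passes, absent an intermediate elbow, through both corner cells $(x_p,y_q)$ and $(x_q,y_p)$, and likewise for the pipe $q'$ carrying the northwest half of the contact at $(x_q,y_q)$; if neither turns inside the rectangle they cross at both corners, contradicting reducedness. Your one-directional tracing ($p$ eastward along row $y_p$, $q$ backward along row $y_q$) puts the two pipes on parallel rows and cannot produce this double-crossing contradiction. Relatedly, ``the north case being symmetric'' does not work as stated: if $p$'s elbow at $(x_p,y_p)$ is the northwest half (entering west, exiting north), tracing $p$ forward leaves the rectangle immediately; the correct move is to use the free arc $p \to p'$ to the southeast pipe of that same contact and trace $p'$ instead, which is what the paper does.

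A secondary issue: for condition (2), row/column convexity of $F$ alone does not place $(x_q,y_p)$ and $(x_p,y_q)$ in $F$; column $x_q$ contains $(x_q,y_q)$ but nothing yet forces it to reach up to row $y_p$. You additionally need that the upper (resp.\ lower) boundary of $F$ is unimodal across columns, which the paper extracts from the explicit decomposition of the boundary into stair, starting and ending paths. This is fixable, but ``carefully chain HV-convexity applications'' does not yet supply it. For condition (1), your observation that the four corners together with HV-convexity put the whole rectangle in $F$ is correct and does suffice to propagate the hypothesis through the induction.
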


Some cases of this lemma are illustrated in \cref{fig:elbows_rectangle}. The first pair of elbows has the rectangle between them partially outside of the shape, so the lemma cannot be applied, and there is no path from~$5$ to~$3$ in the contact graph. The second pair of elbows has the rectangle between them completely in the shape, so we can apply case one of the lemma, and we can check that there is a path~$1 \to 3 \to 2 \to 4$ in the contact graph. Finally, the third pair of elbows is between the two gray cells, both inside the shape, and so we can apply case~2 of the lemma; once again, there is a path~$3 \to 2 \to 4$ in the contact graph.

\begin{figure}
\begin{center}
\includegraphics[scale=.9]{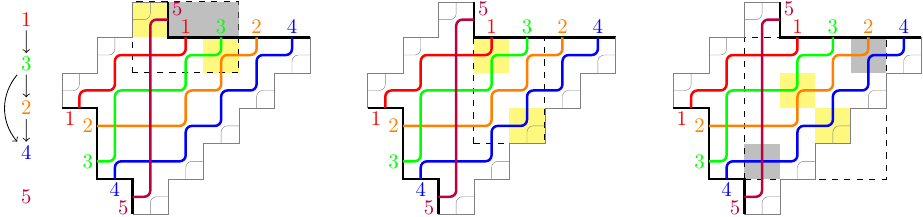}
\caption{Illustrations of the cases of \cref{lem:elbows_rectangle}.}
\label{fig:elbows_rectangle}
\end{center}
\end{figure}

\begin{proof}
We will start by proving the first case, and the second and third cases are natural consequences of it. Suppose thus that the situation is as described in the lemma, and that~$(x_p,y_q)$ and~$(x_q,y_p)$ are in~$F$. We proceed by induction on the grid distance between~$(x_p,y_p)$ and~$(x_q,y_q)$. If that distance is~$0$, then the two elbows are in the same cell, so there is an edge~$p \to q$ in~$P\contact$ and so~$p \less_P q$. Otherwise, let~$p'$ be the pipe with a southeast elbow \seelbow{} in~$(x_p,y_p)$, and~$q'$ be the pipe with a northwest elbow \nwelbow{} in~$(x_q,y_q)$ (both must exist, since from the respective positions of~$(x_p,y_p)$ and~$(x_q,y_q)$ the former cannot be on the SE stair path of~$F$ nor the latter be on the NW stair path). We know that either~$p = p'$ or~$p \to p'$ is an edge of~$P\contact$, and either~$q = q'$ or~$q' \to q$ is an adge of~$P\contact$. Let us now follow~$p'$ along row~$y_p$ and column~$x_p$. Either~$p'$ crosses straight through both cells~$(x_p,y_q)$ and~$(x_q,y_p)$ or it has an elbow northwest of~$(x_q,y_q)$ strictly closer to that cell than~$(x_p,y_p)$. In the second case, by induction hypothesis we obtain a directed path from~$p'$ to~$q$ in~$P\contact$. In the first case, we can similarly follow~$q'$ along row~$y_q$ and column~$x_q$; either its crosses straight through both cells~$(x_p,y_q)$ and~$(x_q,y_p)$ or as for~$p'$ we can apply the induction hypothesis to obtain a directed path from~$p$ to~$q'$ in~$P\contact$. Since~$P$ is reduced, we know that~$p'$ and~$q'$ cannot cross twice in cells~$(x_p,y_q)$ and~$(x_q,y_p)$, so we must be able to apply the induction hypothesis. Thus in all cases, we obtain a directed path from~$p$ to~$q$ in~$P\contact$.

Suppose now that we are in case~2 with~$(x_1,y_1)$ and~$(x_2,y_2)$ as described. From the positions of the cells, we know that~$x_1 \leqslant x_p \leqslant x_q \leqslant x_2$ and~$y_1 \leqslant y_q \leqslant y_p \leqslant y_2$. Since the upper boundary of~$F$ is made up of the NW stair path and of the ending path, it goes north then south when sweeping~$F$ from west to east; in particular, since column~$x_q$ is between column~$x_p$ and column~$x_2$, that boundary cannot be lower in column~$x_q$ than in both column~$x_p$ and column~$x_2$. Therefore, since the cells~$(x_p,y_p)$ and~$(x_2,y_2)$ are both in~$F$, the boundary in column~$x_q$ is at least on row~$\min(y_p,y_2) = y_p$. The cell~$(x_q,y_p)$ is therefore south of the upper boundary of~$F$ and north of its cell~$(x_q,y_q)$, and so it is in~$F$. Similarly, since the lower boundary of~$F$ goes south (on the starting path) then north (on the SE stair path) when sweeping~$F$ from west to east, it cannot be higher in column~$x_p$ than in both column~$x_1$ and~$x_q$. It must thus be below row~$\max(y_1,y_q) = y_q$, and so cell~$(x_p,y_q)$ is below~$(x_p,y_p)$ and above the lower boundary, so it is in~$F$. Since both~$(x_p,y_q)$ and~$(x_q,y_p)$ are in~$F$, we can then apply case 1.

Finally, case~3 is a direct consequence of case~2 with the first and last cell of pipe~$q$ used for~$(x_1,y_1)$ and~$(x_2,y_2)$.
\end{proof}

From this lemma we deduce the following statement, which will be at the heart of our later proofs.

\begin{lemma}\label{lem:3_inv}
Let $F$ be an alternating shape and $P$ a reduced pipe dream on $F$ with exit permutation $\omega \in \fS_n$, and $1 \leqslant p < q < r \leqslant n$ such that $\omega\pos(r) < \omega\pos(q) < \omega\pos(p)$. Then if there is an contact of $P$ involving the pipes $p$ and $r$, one of the two following statements is true:
\begin{compactitem}
\item $q \less_P p$ and $q \less_P r$;
\item $p \less_P q$ and $r \less_P q$.
\end{compactitem}
\end{lemma}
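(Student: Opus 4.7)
The approach is to pin down the contact cell of $p$ and $r$ inside $\zone{q}$ and then apply \cref{lem:elbows_rectangle} to suitably located elbows of $q$.

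First, I would show that the contact cell $C = (x_0, y_0)$ of $p$ and $r$ lies in $\zone{q}$. From $C \in \zone{p} \cap \zone{r}$ one has $x_r^s \leqslant x_0 \leqslant x_r^e - 1$ and $y_p^s \leqslant y_0 \leqslant y_p^e - 1$; combined with the inequalities $x_q^s \leqslant x_r^s$, $y_q^s \leqslant y_p^s$, $x_r^e \leqslant x_q^e$, and $y_p^e \leqslant y_q^e$ supplied by \cref{rem:zones} (applied to $p < q < r$ and $\omega\pos(r) < \omega\pos(q) < \omega\pos(p)$), this sandwiches $(x_0, y_0)$ inside $\zone{q}$.

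Next, since the two arcs of $C$ are already occupied by $p$ and $r$, pipe $q$ does not visit $C$; yet $q$ is a monotone northeast path that starts SW of $C$ and ends NE of $C$, both inside $\zone{q}$. Its trajectory must therefore enter the quadrant $\{x \geqslant x_0,\, y \geqslant y_0\}$ either through a cell $(x_0, y)$ with $y > y_0$ via an east step from column $x_0 - 1$ (the \emph{NW-passing} case) or through a cell $(x, y_0)$ with $x > x_0$ via a north step from row $y_0 - 1$ (the \emph{SE-passing} case); monotonicity makes these alternatives mutually exclusive.

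In the SE-passing case, tracing $q$ backwards from $(x, y_0 - 1)$ locates its east step into column $x$: that step must land at some row $\leqslant y_0 - 1$ and be immediately followed by a north step (otherwise $q$ would cross row $y_0$ strictly further east), so $q$ has an elbow strictly SE of $C$. Pairing it with $C$ viewed successively as an elbow of $p$ and of $r$, and invoking case~3 of \cref{lem:elbows_rectangle} via $C \in \zone{q}$, yields $p \less_P q$ and $r \less_P q$. A symmetric trace in the NW-passing case produces an elbow of $q$ weakly NW of $C$: either on row $y_0$ (if $q$ turns east as soon as it reaches that row), or strictly above it in the column where $q$ first crossed row $y_0$ (if $q$ continued north before turning). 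Because the latter elbow can lie above $\zone{p}$ when its $y$-coordinate exceeds $y_p^e - 1$, case~3 of \cref{lem:elbows_rectangle} is not always available; I therefore apply case~2 with the first and last cells of $q$ as $(x_1, y_1)$ and $(x_2, y_2)$. These are cells of $F$, and the inclusion $C \in \zone{q}$ together with the elbow lying in $\zone{q}$ ensures they bracket both elbows, giving $q \less_P p$ and $q \less_P r$.

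The main obstacle is the NW-passing case, where the elbow of $q$ recovered by the trajectory trace may sit too high for case~3 of \cref{lem:elbows_rectangle} to apply directly; one then falls back on case~2 and exploits $C \in \zone{q}$ so that $q$'s own start and end cells serve as the required SW and NE reference corners. Step~1 (the inclusion $C \in \zone{q}$) is the key algebraic input, and the rest of the argument is a careful monotonicity analysis of $q$ skirting the obstacle $C$.
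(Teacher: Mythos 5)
Your proof is correct, but it follows a genuinely different route from the paper's. The paper starts from the fact that $(p,q)$, $(q,r)$, $(p,r)$ are all inversions of $\omega$, so all three pairs cross; it takes the crossing cells $x$ of $p,q$ and $x'$ of $q,r$, observes that one is southwest of the other along $q$ and that the contact $c$ of $p$ and $r$ sits between them, and then extracts an elbow of $q$ and an elbow of $p$ (resp.\ $r$) adjacent to $x$ (resp.\ $x'$), applying case~2 of \cref{lem:elbows_rectangle} with $x$ and $x'$ as the bracketing cells; the two conclusions of the lemma correspond to the two possible orderings of $x$ and $x'$. You instead never mention the crossings: you show directly from \cref{rem:zones} that the contact cell $C$ lies in $\zone{q}$, note that $q$ cannot pass through $C$ and so must skirt it either to the northwest or to the southeast, extract the corresponding elbow of $q$ by a monotonicity trace, and use $C$ itself as the elbow of both $p$ and $r$ simultaneously, feeding cases~2 and~3 of \cref{lem:elbows_rectangle}; here the two conclusions correspond to the two sides on which $q$ can pass. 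Your version is somewhat more economical --- it treats $p$ and $r$ symmetrically in a single stroke and replaces the existence of crossings by the purely coordinate-based inclusion $C \in \zone{q}$ --- and you correctly anticipate the one technical snag (the elbow of $q$ in the NW-passing case may escape $\zone{p}$, so case~3 must be replaced by case~2 with the endpoints of $q$ as reference corners). The paper's version keeps the geometry of the three pairwise crossings explicit, which is closer in spirit to how \cref{lem:elbows_rectangle} is used elsewhere, but neither argument is stronger than the other.
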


\begin{proof}
We know that~$(p,q),(q,r),(p,r) \in \inv(\omega)$ so the three pairs must all cross at some point in~$P$. Denote by~$x$ the cell where~$p$ and~$q$ cross, and~$x'$ the cell where~$q$ and~$r$ cross. Since pipe~$q$ goes through both of those cells, one must be southwest of the other; for now, we suppose that~$x$ is southwest of~$x'$. Moreover, since pipe~$q$ starts and ends between pipes~$p$ and~$r$, any contact between~$p$ and~$r$ must be between~$x$ and~$x'$; suppose that such a contact exist and denote it by~$c$ ($x$ is SW of~$c$ which is SW of~$x'$).

Since pipe~$q$ goes through~$x$ vertically (as~$q > p$) and through~$x'$ horizontally (as~$q < r$), it must have at least one elbow between those two points; we denote by~$e$ the first elbow of~$q$ after~$x$, which must therefore be in the same column as~$x$ and north of it, and also be southwest of~$x'$. Similarly, since pipe~$p$ has a contact~$c$ after~$x$, we can consider its first elbow~$e_p$ after~$x$; it must be in the same row as~$x$ and east of it, and also be weakly southwest of~$c$. Then~$e$ is strictly northwest of~$e_p$ and both elbows are northeast of~$x$ and southeast of~$x'$, so by case~2 of \cref{lem:elbows_rectangle} we obtain that there is a path from~$q$ to~$p$ in~$P\contact$.

Similarly, the last elbow~$e'$ of~$q$ before~$x'$ is directly west of~$x'$ and northeast of~$x$, and the last elbow~$e_r$ of~$r$ before~$x'$ is directly south of~$x'$ and northeast of~$c$, so also northeast of~$x$. Since~$e'$ is northwest of~$e_r$ and both are between~$x$ and~$x'$, we apply case~2 of \cref{lem:elbows_rectangle} and obtain that there is a path from~$q$ to~$r$ in~$P\contact$.

See \cref{fig:proof_3inv} for an illustration of the placement of the various cells that we used. The reasoning is similar for~$x'$ southwest of~$x$, and in that case we obtain paths from~$p$ and~$r$ to~$q$ in~$P\contact$.
\end{proof}

\begin{figure}
\begin{center}
\includegraphics[scale=1]{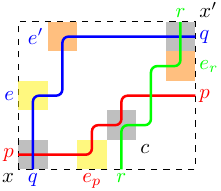}
\caption{Illustration of the proof of lemma \cref{lem:3_inv}.}
\label{fig:proof_3inv}
\end{center}
\end{figure}

\section{A lattice on strongly acyclic pipe dreams}

\label{sec:lattice}

Let~$P$ be a strongly acyclic pipe dream on the alternating shape~$F$ with exit permutation~$\omega$. A \emph{linear extension} of~$P$ is a permutation in~$\fS_n$ that is a topological ordering of~$\econtact{P}$. We denote by~$\lin(P)$ the set of those linear extensions.

We note that since~$\econtact{P}$ has an arc for each noninversion of~$\omega$, any element of~$\lin(P)$ is below~$\omega$ in the right weak order on~$\fS_n$. More specifically, the permutations in~$\lin(P)$ are exactly the topological orderings of~$P\contact$ that are below~$\omega$ in the right weak order.

\subsection{Linear extensions of pipe dreams}

In this section we show that linear extensions of pipe dreams on the same alternating shape and with the same exit permutation behave very nicely on~$[\id, \omega]$.

\begin{theorem}\label{thm:lin_union}
Let~$F$ be an alternating shape and~$\omega\in \fS_n$ be sortable on~$F$. Then the set~$\{ \lin(P) \mid P \in \Sigma_F(\omega) \}$ is a partition of the weak order interval~$[\id, \omega]$.
\end{theorem}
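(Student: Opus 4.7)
The plan is to establish three claims: that every $\lin(P)$ sits inside $[\id, \omega]$, that distinct strongly acyclic pipe dreams produce disjoint sets of linear extensions, and that the union of the $\lin(P)$ for $P \in \Sigma_F(\omega)$ covers all of $[\id, \omega]$.

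The containment $\lin(P) \subseteq [\id, \omega]$ is immediate and was already remarked in the text preceding the theorem: because $\econtact{P}$ by construction contains an arc $(a, b)$ for every noninversion of $\omega$, any topological sort $\pi$ of $\econtact{P}$ satisfies $\ninv(\omega) \subseteq \ninv(\pi)$, i.e.\ $\pi \leq \omega$ in the weak order, and $\pi \geq \id$ is automatic.

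For disjointness, the cleanest route is to invoke the subword complex translation of \cref{rem:is_SC}, which identifies $\Pi_F(\omega)$ with the facets of $\SC(Q_F, \omega)$ and strongly acyclic pipe dreams with strongly acyclic facets, and then to cite the analogous partition statement from \cite{CartierPilaud}. A more geometric reformulation uses that two distinct elements of $\Pi_F(\omega)$ are joined by a chain of flips, each of which reverses the orientation of an arc between a pair of pipes in the contact graph; a common linear extension would then have to order that pair both ways.

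The substantial remaining step is coverage: for each $\pi \in [\id, \omega]$, exhibit $P \in \Sigma_F(\omega)$ with $\pi \in \lin(P)$. A natural strategy is to construct $P$ algorithmically, sweeping $F$ cell by cell and placing a cross in a cell if and only if the two pipes about to enter form an inversion of $\pi$ (and a contact otherwise). The reducedness of the filling and the identification of its exit permutation as $\omega$ should follow from $\pi \leq \omega$, which ensures every inversion of $\pi$ is an inversion of $\omega$ and that the pipes have room to realize it; that $\pi \in \lin(P)$ follows directly from how the contacts were placed. The main obstacle I anticipate is strong acyclicity of $\econtact{P}$: any hypothetical directed cycle must pass through a triple $p < q < r$ with $\omega\pos(r) < \omega\pos(q) < \omega\pos(p)$, and \cref{lem:3_inv} is designed precisely to force such a triple to sit in one of two coherent orientations (either $q \less_P p$ and $q \less_P r$, or $p \less_P q$ and $r \less_P q$), which is compatible with the total order inherited from $\pi$ and thereby rules out cycles.
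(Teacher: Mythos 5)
Your three-part decomposition (containment, disjointness, coverage) is the same as the paper's, and the first two parts are sound: the containment argument is exactly the one the paper gives, and for disjointness the paper does precisely what your first option suggests, namely pass to the subword complex $\SC(Q_F,\omega)$ via \cref{rem:is_SC} and use item (3) of \cref{thm:Coxeter_lin_union}, together with the observation that $\lin(P)\subseteq\lin(\rc(P))$ because $\econtact{P}$ contains $P\contact$. (Your alternative flip-chain argument for disjointness is too sketchy to evaluate, but you do not need it.)

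The gap is in the coverage step. First, the algorithm you describe --- place a cross exactly when the two incoming pipes form an inversion of $\pi$ --- does not produce a pipe dream with exit permutation $\omega$: a pair in $\inv(\omega)\setminus\inv(\pi)$ would never be crossed, so the filling would sort $\pi$ rather than $\omega$ (and need not even lie in $\Pi_F(\omega)$). The paper's \cref{algo:gpd_sweep} repairs this with the clause ``or $c$ is the last possible crossing point for $p$ and $q$,'' and the correctness of that repaired algorithm is itself a nontrivial statement that this paper does not reprove but imports from \cite{CartierPilaud}; your ``should follow from $\pi\leq\omega$'' leaves the entire burden of the coverage claim unaddressed. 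The paper instead derives coverage directly from item (2) of \cref{thm:Coxeter_lin_union}: every $\pi\in[\id,\omega]$ is a linear extension of $\rc(P)$, i.e.\ of $P\contact$, for some facet $P$. Second, your treatment of strong acyclicity is both unjustified and unnecessary: you assert that a directed cycle in $\econtact{P}$ must pass through a triple $p<q<r$ with $\omega\pos(r)<\omega\pos(q)<\omega\pos(p)$ admitting a contact between $p$ and $r$, which is not established (and \cref{lem:3_inv} requires such a contact to apply at all). The clean argument, which is the one the paper uses, is that the arcs of $\econtact{P}$ not in $P\contact$ all correspond to noninversions of $\omega$, hence to noninversions of any $\pi\leq\omega$; so a $\pi\leq\omega$ that is a linear extension of $P\contact$ is automatically a linear extension of $\econtact{P}$, which is therefore acyclic. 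This single observation simultaneously yields strong acyclicity of $P$ and $\pi\in\lin(P)$, completing coverage without any appeal to \cref{lem:3_inv}.
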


To prove this theorem, we will use the following result, given in section 6 
of \cite{CartierPilaud}.

\begin{theorem}\label{thm:Coxeter_lin_union}
Let~$\SC(Q,w)$ be a non-empty subword complex.
The following hold:
\begin{compactenum}
\item  $\bigcup_{I\in\SC(Q,w)} \lin(I)$ is a lower set of the weak order.
\label{thmA_item1}
\item $[e,w] \subseteq \bigcup_{I\in\SC(Q,w)} \lin(I)$ 
\label{thmA_item2}
\item If $I_1\neq I_2$ then $\lin(I_1)\cap \lin(I_2)=\varnothing$.
\label{thmA_item3}
\item The set $\lin(I)$ is closed by intervals.
\label{thmA_item4}
\end{compactenum}
\end{theorem}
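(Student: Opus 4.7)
The plan is to deduce \cref{thm:lin_union} as an almost immediate corollary of the cited \cref{thm:Coxeter_lin_union}, applied to the subword complex $\SC(Q_F, \omega)$, with the extra observation that the facets contributing nonempty sets of linear extensions are precisely the strongly acyclic pipe dreams. The only real content beyond citing the black box is (a) restricting the union from all facets to $\Sigma_F(\omega)$, and (b) pinning down the union to exactly $[\id,\omega]$ rather than merely containing it.

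First, I would use \cref{rem:is_SC} to identify $\Pi_F(\omega)$ with the facet set of $\SC(Q_F, \omega)$, and transport the definition of $\lin$: a facet contributes exactly the topological orderings of its extended contact graph $\econtact{P}$. The key observation is that for $P \in \Pi_F(\omega) \setminus \Sigma_F(\omega)$, the graph $\econtact{P}$ contains a directed cycle, so $\lin(P) = \emptyset$. Consequently,
\[
\bigcup_{P \in \Pi_F(\omega)} \lin(P) \;=\; \bigcup_{P \in \Sigma_F(\omega)} \lin(P),
\]
and the left-hand side is the union appearing in \cref{thm:Coxeter_lin_union}.

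Second, I would establish the two inclusions needed for the union to equal $[\id,\omega]$. The inclusion $[\id,\omega] \subseteq \bigcup_{P \in \Sigma_F(\omega)} \lin(P)$ is exactly item~\ref{thmA_item2} of \cref{thm:Coxeter_lin_union}. For the reverse inclusion, I would recall the observation already noted after the definition of $\lin(P)$: since $\econtact{P}$ contains the arc $(a,b)$ for every noninversion $(a,b)$ of $\omega$, any topological ordering $\pi$ of $\econtact{P}$ satisfies $\pi\pos(a) < \pi\pos(b)$ whenever $(a,b) \in \ninv(\omega)$, so $\ninv(\omega) \subseteq \ninv(\pi)$, i.e.\ $\pi \leqslant \omega$ in the right weak order. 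Together with the trivial $\pi \geqslant \id$, this gives $\lin(P) \subseteq [\id, \omega]$, and hence the whole union lies in $[\id, \omega]$.

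Finally, disjointness of the family $\{\lin(P) \mid P \in \Sigma_F(\omega)\}$ is exactly item~\ref{thmA_item3} of \cref{thm:Coxeter_lin_union} applied to distinct facets. Combining disjointness, the mutual inclusion of unions, and $\lin(P) \subseteq [\id,\omega]$, we conclude that $\{\lin(P) \mid P \in \Sigma_F(\omega)\}$ partitions $[\id,\omega]$. There is essentially no obstacle here beyond careful bookkeeping; the only point where one must be slightly careful is the translation from the Coxeter-theoretic language of \cref{thm:Coxeter_lin_union} to alternating pipe dreams, and checking that cycles of $\econtact{P}$ are the only reason for $\lin(P)$ to be empty so that the restriction to $\Sigma_F(\omega)$ is genuinely harmless.
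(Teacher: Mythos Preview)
Your strategy coincides with the paper's: deduce \cref{thm:lin_union} from the black box \cref{thm:Coxeter_lin_union} by identifying $\Pi_F(\omega)$ with the facets of $\SC(Q_F,\omega)$, then invoke items~\ref{thmA_item2} and~\ref{thmA_item3} together with the observation that the noninversion arcs in $\econtact{P}$ force $\lin(P)\subseteq[\id,\omega]$.

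One point in your translation step needs correction. You assert that under the dictionary of \cref{rem:is_SC} the Coxeter-theoretic $\lin(I)$ becomes the set of linear extensions of the \emph{extended} contact graph $\econtact{P}$. In fact the root configuration of a facet is the ordinary contact graph $P\contact$, so the Coxeter $\lin(I)$ is the (generally larger) set of linear extensions of $P\contact$, which the paper writes as $\lin(\rc(P))$. Consequently your displayed equality $\bigcup_{P\in\Pi_F(\omega)}\lin(P)=\bigcup_{I}\lin(I)$ is not literally the union appearing in \cref{thm:Coxeter_lin_union}; the latter is $\bigcup_P \lin(\rc(P))$ and may strictly contain permutations not below $\omega$. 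The paper bridges this gap explicitly: since $\econtact{P}\supseteq P\contact$ one has $\lin(P)\subseteq\lin(\rc(P))$, so disjointness transfers from item~\ref{thmA_item3}; and conversely any $\pi\le\omega$ that is a linear extension of $P\contact$ is automatically a linear extension of $\econtact{P}$ (the added arcs are noninversions of $\omega$, hence of $\pi$), which both shows that such a $P$ is strongly acyclic and transfers item~\ref{thmA_item2} to $\Sigma_F(\omega)$. With this adjustment your argument is complete and matches the paper's.
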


\begin{proof}
As noted in \cref{rem:is_SC}, we can see the set of reduced pipe dreams~$\Pi_F(\omega)$ as the facets of a subword complex~$\SC(Q_F, \omega)$ on the Coxeter group~$A_{n-1}$, with~$Q_F$ determined by~$F$. The root configuration of the facet associated to a pipe dream~$P$ is given by its contact graph~$P\contact$, and thus acyclic pipe dreams are associated to acyclic facets.

Since~$\econtact{P}$ contains~$P\contact$, we know that any linear extension of~$\econtact{P}$ is also a linear extension of~$P\contact$ and is therefore in~$\lin(\rc(P))$. In particular, combined with \cref{thmA_item3} of \cref{thm:Coxeter_lin_union}, for any two pipe dreams~$P,P' \in \Sigma_F(\omega)$, either~$P = P'$ or~$\lin(P) \cap \lin(P') \subset \lin(\rc(P)) \cap \lin(\rc(P')) = \emptyset$, so the sets~$(\lin(P))_{P\in \Sigma_F(\omega)})$ are disjoints.

Moreover, since the arcs in~$\econtact{P}$ that are not in~$P\contact$ all correspond to noninversions of~$\omega$, any~$\pi \leqslant \omega$ that is a linear extension of~$P\contact$ (i.e.\ of~$\rc(P)$) is also a linear extension of~$\econtact{P}$, and if such a~$\pi$ exists in~$\lin(P)$ then~$P$ is strongly acyclic. Therefore, since from \cref{thmA_item2} of \cref{thm:Coxeter_lin_union}~$[\id, \omega] \subset \bigcup_{P \in \Pi_F(\omega)} \lin(\rc(P))$, we know that~$[\id, \omega] \subset \bigcup_{P \in \Sigma_F(\omega)} \lin(P)$.

Finally, since for any~$P \in \Sigma_F(\omega)$ the graph~$\econtact{P}$ contains an arc for each noninversion of~$\omega$, any linear extension of~$\econtact{P}$ is also in~$[\id, \omega]$, and~$\bigcup_{P \in \Sigma_F(\omega)} \lin(P) \subset [\id, \omega]$. This conclude the proof.
\end{proof}

This theorem shows that for any permutation~$\pi \in [\id, \omega]$ there exists exactly one pipe dream~$P \in \Sigma_F(\omega)$ such that~$\pi \in \lin(P)$. The following result shows that if we know this~$P$ for~$\pi$, then we can easily find~$P'$ such that~$\pi' \in \lin(P')$ for any~$\pi' \in [\id, \omega]$ covering~$\pi$.

\begin{lemma}\label{lem:cover_flip}
If~$\pi' := UpqV \lessdot \pi := UqpV$ is a cover of the weak order and~$\pi$ is a linear extension of~$P\contact$ for some pipe dream~$P \in \Sigma_F(\omega)$, then:
\begin{compactitem}
\item if~$P\contact$ has no arc~$q \to p$ then~$\pi'$ is a linear extenions of the same graph;
\item otherwise~$\pi'$ is a linear extension of~$P'{}\contact$, with~$P'$ the pipe dream obtained by flipping the northeastmost contact between pipes~$q$ and~$p$ in~$P$.
\end{compactitem}
\end{lemma}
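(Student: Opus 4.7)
The first case is immediate: since $\pi$ is a linear extension of $P\contact$ with $q$ appearing immediately before $p$, the graph $P\contact$ can contain no arc $p \to q$; combined with the hypothesis that it has no arc $q \to p$ either, the pair $\{p, q\}$ is unconstrained in $P\contact$, so swapping $p$ and $q$ yields another linear extension of the same graph.

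For the second case, note that $(p, q) \in \inv(\pi) \subseteq \inv(\omega)$ guarantees that pipes $p$ and $q$ cross in $P$, and reducedness makes this crossing unique; call it $x$. A direct geometric analysis of two pipes $p < q$ crossing exactly once shows that $p$ lies north-west of $q$ at every cell strictly south-west of $x$, while $q$ lies north-west of $p$ at every cell strictly north-east of $x$; this in turn dictates the arc direction at any contact between $p$ and $q$, because at such a contact the pipe arriving from the west does $W \to N$ and the pipe arriving from the south does $S \to E$. The arc is therefore $p \to q$ at a contact south-west of $x$, and $q \to p$ at a contact north-east of $x$. The hypothesis that $P\contact$ contains an arc $q \to p$ thus produces at least one contact north-east of $x$; the north-east-most contact $c$ between $p$ and $q$ is then itself north-east of $x$, and the flip pairing $c$ with $x$ is well-defined.

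The flip modifies the pipe dream only at the cells $c$ and $x$ and along the two portions of $p$ and $q$ strictly enclosed in the lens bounded by those pipes between $x$ and $c$: the arc $q \to p$ at $c$ disappears, the former crossing $x$ becomes a contact carrying the arc $p \to q$ (forced by the directions from which $p$ and $q$ reach $x$, inherited from outside the lens), and at any cell inside the lens where $p$ or $q$ meets a third pipe $r$ in a contact, the labels $p$ and $q$ are interchanged (so an arc $r \to p$ at such a cell in $P\contact$ appears as $r \to q$ in $(P')\contact$, and symmetrically). All other contacts between $p$ and $q$ lie strictly south-west of $x$ and hence carry arc $p \to q$ in both $P\contact$ and $(P')\contact$; in particular $(P')\contact$ has no arc $q \to p$. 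It then remains to check that $\pi'$ respects every arc of $(P')\contact$: arcs disjoint from $\{p, q\}$ are unchanged and their endpoints occupy the same positions in $\pi$ and $\pi'$; for any arc $r \to p$ in $(P')\contact$, the associated arc in $P\contact$ (either $r \to p$ itself when the contact is outside the lens, or $r \to q$ when it is inside) is respected by $\pi$, forcing $r$ to occupy a position strictly less than both positions held by $\{p, q\}$ in $\pi$, and hence still before $p$ in $\pi'$; symmetric arguments cover the arcs $r \to q$, $p \to r$, $q \to r$; and the unique arc $p \to q$ between the two pipes matches the order of $\pi'$. The main technical obstacle is the geometric claim about arc directions at contacts south-west and north-east of $x$, which rests on the fact that pipes $p$ and $q$ swap their relative north-west/south-east position precisely at their unique crossing; once this is in hand, the rest of the verification is routine bookkeeping around the swap of the adjacent entries $p$ and $q$.
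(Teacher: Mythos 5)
Your overall strategy coincides with the paper's: handle the arcless case by swapping the two adjacent entries, and otherwise flip the northeastmost $p$--$q$ contact with the (unique, by reducedness) crossing $x$, then track how the arcs of the contact graph change under the exchange of the two pipe segments inside the lens. Most of your verification is correct and considerably more detailed than the paper's one-paragraph argument. One minor remark: to obtain the crossing you invoke $\inv(\pi)\subseteq\inv(\omega)$, i.e.\ $\pi\leqslant\omega$, which is not literally among the hypotheses; the paper deduces the crossing purely geometrically (a contact realizing the arc $q\to p$ has $q$ northwest of $p$, while $q$ starts southeast of $p$, so the two pipes must have crossed before reaching that contact). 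This is harmless where the lemma is applied, but the geometric argument is the one that matches the stated hypotheses.

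The one genuine flaw is the sentence asserting that ``all other contacts between $p$ and $q$ lie strictly south-west of $x$.'' This is wrong on both counts. First, under your hypotheses there can be no contact southwest of $x$ at all: such a contact would put the arc $p\to q$ into $P\contact$, contradicting that $\pi=UqpV$, with $q$ immediately before $p$, is a linear extension of $P\contact$. Second, and more importantly, there may well be several contacts between $p$ and $q$ strictly between $x$ and $c$ (two monotone pipes can touch, separate, and touch again after their crossing), and each of these carries the arc $q\to p$ in $P\contact$. Your argument as written leaves those arcs untouched, which would defeat the claim that $(P')\contact$ has no arc $q\to p$. The repair is exactly the observation the paper makes explicitly (``we reverse all the contacts between $p$ and $q$''): the label swap inside the lens applies to these contacts as well, so every $p$--$q$ contact between $x$ and $c$ has its arc reversed from $q\to p$ to $p\to q$ by the flip, and $\pi'=UpqV$ respects all of them. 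With that correction, your bookkeeping goes through and the proof matches the paper's.
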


\begin{proof}
If there is no arc~$q \to p$ in~$P\contact$, then~$\pi'$ is obviously also a linear extension of that same graph; otherwise there is at least one contact between~$q$ and~$p$ in~$P$. Since~$q$ starts southeast of~$p$ and is northwest of~$p$ on that contact, pipes~$p$ and~$q$ must cross at some point in~$P$, so any contact between them is flippable. If we flip the one that is furthest northeast, we reverse all the contacts between~$p$ and~$q$ in~$P$ and we only exchange~$p$ and~$q$ in some other contacts with other pipes. This guarantees that the obtained pipe dream~$P'$ has~$\pi'$ as a linear extension of its contact graph.
\end{proof}

\subsection{A lattice congruence}

Let us denote by~$\equiv_{F,\omega}$ the equivalence relation on~$[\id, \omega]$ that has~$\{ \lin(P) \mid P \in \Sigma_F(\omega) \}$ as its equivalence classes. By \cref{thm:lin_union}, this definition is correct. This section will prove the following theorem.

\begin{theorem}\label{thm:lattice_congruence}
For any alternating shape~$F$ and any permutation~$\omega \in \fS_n$ sortable on~$F$, the relation~$\equiv_{F,\omega}$ is a lattice congruence of the weak order interval~$[\id, \omega]$.
\end{theorem}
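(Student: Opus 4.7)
The plan is to invoke the standard characterization of a lattice congruence on a finite lattice: an equivalence~$\equiv$ is a lattice congruence iff (i) every class is an interval and (ii) both projections~$\pi \mapsto \pi^\uparrow$ and~$\pi \mapsto \pi_\downarrow$ to the top and bottom of the class are order-preserving, a condition that suffices to verify on covers. Condition~(ii) will follow from \cref{lem:cover_flip}, while~(i) uses the finer structure provided by \cref{lem:3_inv}.

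For~(i), \cref{thm:Coxeter_lin_union} part~4 gives that~$\lin(P)$ is interval-closed inside~$[\id,\omega]$. To promote this to a single interval, I would describe the extrema of~$\lin(P)$ as specific permutations. Define the \emph{forced inversions} of~$\lin(P)$ as~$F_P := \{(a,b) : a<b \text{ and } b\less_P a\}$; every~$\sigma\in\lin(P)$ satisfies~$F_P \subseteq \inv(\sigma)$ by definition. My plan is to prove that~$F_P$ is the inversion set of a permutation~$m_P$, necessarily the minimum of~$\lin(P)$, by verifying the two conditions characterising inversion sets. The first condition (closure under transitivity of inversions) is immediate from transitivity of~$\less_P$. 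For the second -- that for every~$a<b<c$ with~$(a,c)\in F_P$ we have~$(a,b)\in F_P$ or~$(b,c)\in F_P$ -- the cases where at least one of~$(a,b)$ and~$(b,c)$ is a noninversion of~$\omega$ follow at once by combining the corresponding noninversion arc of~$\econtact{P}$ with the hypothesis~$c\less_P a$ and the strong acyclicity of~$\econtact{P}$. The remaining case, where both~$(a,b)$ and~$(b,c)$ are inversions of~$\omega$, reduces by induction on the length of a witnessing directed path from~$c$ to~$a$ in~$\econtact{P}$ to the situation of a direct contact between pipes~$a$ and~$c$, which is exactly \cref{lem:3_inv}. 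The maximum~$M_P$ is described dually as the permutation whose noninversion set collects all forced noninversions, and~$\lin(P) = [m_P, M_P]$ then follows from interval-closedness.

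For~(ii), consider a cover~$\pi' := UpqV \lessdot \pi := UqpV$ in~$[\id,\omega]$ with~$p<q$. If~$\pi'\equiv_{F,\omega}\pi$, both projections agree on the endpoints and the order-preservation is trivial. Otherwise \cref{lem:cover_flip} gives that~$P' := P(\pi')$ is obtained from~$P := P(\pi)$ by flipping the northeastmost~$(p,q)$-contact of~$P$ with the~$(p,q)$-crossing of~$P$. At the level of extended contact graphs,~$\econtact{P}$ contains the arc~$q\to p$ whereas~$\econtact{P'}$ contains the reverse arc~$p\to q$, and their other arcs differ only through local modifications at the two flipped cells. From the closed-form description of~$m_{P}, M_{P}, m_{P'}, M_{P'}$ furnished by step~(i), one should deduce the inclusions~$\inv(m_{P'}) \subseteq \inv(m_P)$ and~$\inv(M_{P'}) \subseteq \inv(M_P)$, i.e.\ the required order-preservation on this cover.

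The main obstacle is the inductive step in step~(i): when~$c\less_P a$ is witnessed only by a long directed path in~$\econtact{P}$, one must reduce the configuration to one where \cref{lem:3_inv} applies directly. I expect \cref{lem:elbows_rectangle}, which generates directed paths in~$P\contact$ from geometric configurations of elbows, to be the key tool for producing the required contact between the extremes of a subpath. Once~(i) is established, the arc bookkeeping in~(ii) should follow routinely, since the flip only modifies arcs between~$p, q$ and pipes passing through the two flipped cells.
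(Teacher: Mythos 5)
Your step~(i) is essentially the paper's proof of \cref{prop:fibers_intervals}: the paper also characterizes $\min(\lin(P))$ and $\max(\lin(P))$ via the forced inversions $\{(a,b): a<b,\ b\less_P a\}$ and forced noninversions, and verifies the closure condition (there quoted from Bj\"orner--Wachs as \cref{prop:WOIP}) by induction on the length of a shortest path from $c$ to $a$ in $\econtact{P}$, with the noninversion arcs of $\econtact{P}$ and \cref{lem:3_inv} handling the base case exactly as you describe. That half of your plan is sound and matches the paper.

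The gap is in step~(ii). You claim that the inclusions $\inv(m_{P'})\subseteq\inv(m_P)$ and $\inv(M_{P'})\subseteq\inv(M_P)$ follow ``routinely'' from the local description of the flip, i.e.\ that for all $a<b$, $b\less_{P'}a$ implies $b\less_P a$ (and dually). This is precisely conditions~(4)--(5) of \cref{prop:def_acyclic_order}, which the paper \emph{derives from} order-preservation of the projections (\cref{prop:minmax}), not the other way around; asserting it as routine arc bookkeeping begs the question. Concretely, the flip reverses every $q\to p$ contact into a $p\to q$ contact and relabels, between the exchanged crossing and contact, the arcs incident to $p$ and $q$. A directed path in $\econtact{P'}$ witnessing $b\less_{P'}a$ may pass through the new arc $p\to q$ flanked by arcs $u\to p$ and $q\to v$ that come from contacts \emph{outside} the swapped region and hence are unchanged in $\econtact{P}$; in $\econtact{P}$ you then hold $u\to p$, $q\to p$ and $q\to v$, which do not concatenate into a path from $u$ to $v$. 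Resolving this requires producing extra arcs from the geometry (via \cref{lem:elbows_rectangle} or \cref{lem:3_inv}), and that is the actual content of the paper's argument: \cref{prop:minmax} proceeds by induction on the weak order distance from $\pi$ to $\max(C)$, comparing the cover $\pi\lessdot\pi'$ leaving the class with a cover $\pi\lessdot\sigma$ staying inside it, and performing a five-case analysis on the relative positions of the two transpositions (the case of overlapping transpositions again invoking \cref{lem:3_inv} to rule out an arc $p\to r$). Your proposal is missing this diamond-type induction or an equivalent substitute, so as written step~(ii) does not go through.
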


As a reminder, an equivalence relation~$\equiv$ on a lattice~$(L, \leqslant, \wedge, \vee)$ is a lattice congruence if and only if it respects meets and joins, i.e.\ for any~$x, x', y, y' \in L$, if~$x \equiv x'$ and~$y \equiv y'$ then~$x \wedge y \equiv x' \wedge y'$ and~$x \vee y \equiv x' \vee y'$. To prove \cref{thm:lattice_congruence}, we will use the following classical characterization of lattice congruences.

\begin{theorem}[\cite{Reading-latticeTheory}]\label{thm:congruence_charac}
An equivalence relation~$\equiv$ on a lattice~$L$ is a congruence if and only if
\begin{compactenum}
\item every equivalence class of~$\equiv$ is an interval;
\item the projections~$p\ftop: L \mapsto L$ and~$p\fbot: L \mapsto L$, which map an element of~$L$ respectively to the maximal and minimal elements of its equivalence class, are order preserving.
\end{compactenum}
\end{theorem}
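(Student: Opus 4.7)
The plan is to apply the characterization of lattice congruences in \cref{thm:congruence_charac} to the equivalence~$\equiv_{F,\omega}$ on the weak order interval~$[\id, \omega]$, verifying the two conditions: that every class is an interval and that the projections~$p\ftop, p\fbot$ are order-preserving.

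For the interval condition, each equivalence class of~$\equiv_{F,\omega}$ is of the form~$\lin(P)$ for some~$P \in \Sigma_F(\omega)$. \cref{thmA_item4} of \cref{thm:Coxeter_lin_union} states that~$\lin(P)$ is closed by intervals in the right weak order on~$\fS_n$, while \cref{thm:lin_union} ensures~$\lin(P) \subseteq [\id, \omega]$. A finite, nonempty, convex subset of the finite poset~$[\id, \omega]$ is itself an interval of~$[\id, \omega]$, establishing the first condition.

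For the monotonicity of~$p\ftop$ and~$p\fbot$, it suffices to check the property along covers of~$[\id, \omega]$, since the required inequalities then compose along any chain of comparisons. So let~$\pi' \lessdot \pi$ be such a cover, written~$\pi' = UpqV \lessdot \pi = UqpV$ with~$p < q$, and let~$P, P' \in \Sigma_F(\omega)$ be the unique pipe dreams with~$\pi \in \lin(P)$ and~$\pi' \in \lin(P')$ provided by \cref{thm:lin_union}. \cref{lem:cover_flip} splits the analysis: if~$P = P'$, both projections coincide on~$\pi$ and~$\pi'$ and the inequality is trivial; if~$P \neq P'$, then~$P'$ arises from~$P$ by flipping the north-east-most contact between pipes~$p$ and~$q$, and we must verify~$p\fbot(\pi') \leq p\fbot(\pi)$ and~$p\ftop(\pi') \leq p\ftop(\pi)$. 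I would treat this nontrivial case by walking maximal chains upward from~$p\fbot(\pi')$ and downward from~$p\ftop(\pi)$ in~$[\id, \omega]$: each cover traversed either stays within the current class or, by \cref{lem:cover_flip}, exits through an adjacent swap of~$p$ and~$q$ landing in the other class, funneling the two intervals~$\lin(P)$ and~$\lin(P')$ into the correct relative position.

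The main obstacle is this final step — upgrading the single comparison~$\pi' \lessdot \pi$ provided by \cref{lem:cover_flip} into a comparison of the extrema of the two classes. This will require a careful analysis of how the flip changes the arcs of~$\econtact{P}$, showing in particular that arcs between pipes other than~$p$ or~$q$ remain unchanged by the flip, so that the relative order of every such pair in any linear extension of~$\econtact{P}$ and of~$\econtact{P'}$ agrees. Once this is established, the minima and maxima of the two classes differ only in the positions of the letters~$p$ and~$q$ at a flip-controlled location, and the weak-order inequalities $p\fbot(\pi') \leq p\fbot(\pi)$ and $p\ftop(\pi') \leq p\ftop(\pi)$ follow explicitly.
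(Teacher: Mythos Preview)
Your proposal does not address the stated theorem. \Cref{thm:congruence_charac} is a general lattice-theoretic characterization, cited from \cite{Reading-latticeTheory} and valid for an arbitrary equivalence relation on an arbitrary lattice; the paper does not prove it and neither do you. What you have actually written is an attempt at \cref{thm:lattice_congruence}, namely that the specific relation~$\equiv_{F,\omega}$ on~$[\id,\omega]$ is a congruence, by \emph{applying} \cref{thm:congruence_charac}. That is a different statement.

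Even read as a proof of \cref{thm:lattice_congruence}, the argument for the interval condition has a genuine gap. You invoke \cref{thmA_item4} of \cref{thm:Coxeter_lin_union} to conclude that~$\lin(P)$ is closed under intervals, and then assert that ``a finite, nonempty, convex subset of the finite poset~$[\id,\omega]$ is itself an interval''. This is false: in~$[\id,321]$ the set~$\{132,213\}$ is convex but not an interval. Convexity yields neither a unique minimum nor a unique maximum. The paper does real work here: \cref{prop:fibers_intervals} verifies the hypotheses of \cref{prop:WOIP} directly on the order~$\less_P$, using \cref{lem:3_inv} for the delicate case where all three of~$p,q,r$ are pairwise inversions of~$\omega$. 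Your monotonicity sketch is also incomplete in a way you acknowledge, and the proposed completion is not correct: even if arcs between pipes other than~$p$ and~$q$ are unchanged by the flip, arcs between~$p$ (or~$q$) and third pipes do change, so the transitive closures~$\less_P$ and~$\less_{P'}$ can differ on pairs not equal to~$(p,q)$, and the extrema of the two classes need not differ only in the positions of~$p$ and~$q$. The paper's \cref{prop:minmax} avoids this by an inductive case analysis on the interaction of two adjacent transpositions.
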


We will start with point 1 of the characterization of lattice congruences.

\begin{prop}\label{prop:fibers_intervals}
For any alternating shape~$F$ and any~$\omega \in \fS_n$ sortable on~$F$, the equivalence classes of~$\equiv_{F, \omega}$ are intervals.
\end{prop}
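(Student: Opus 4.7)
My plan is to prove \cref{prop:fibers_intervals} by showing that $\lin(P)$ is order-convex in the weak order, closed under join, and closed under meet. From these, $\pi^\fbot := \bigwedge \lin(P)$ and $\pi^\ftop := \bigvee \lin(P)$ lie in $\lin(P)$ (by meet/join closure), and convexity then yields $[\pi^\fbot, \pi^\ftop] \subseteq \lin(P)$; the reverse inclusion holds by minimality and maximality, so $\lin(P) = [\pi^\fbot, \pi^\ftop]$ is an interval.

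Convexity and join-closure reduce to short manipulations of inversion sets. For convexity, if $\pi_1 \leqslant \pi \leqslant \pi_2$ with $\pi_1, \pi_2 \in \lin(P)$ and $a \to b$ is an arc of $\econtact{P}$, then the case $a < b$ is handled via $\inv(\pi) \subseteq \inv(\pi_2)$ (giving $(a,b)\notin\inv(\pi)$) and the case $a > b$ via $\inv(\pi_1) \subseteq \inv(\pi)$ (giving $(b,a)\in\inv(\pi)$). For join-closure of $\pi, \sigma \in \lin(P)$, the case $a < b$ uses $\pi \vee \sigma \leqslant \omega$ so $(a, b) \notin \inv(\omega) \supseteq \inv(\pi \vee \sigma)$, and the case $a > b$ is immediate from $\inv(\pi) \cup \inv(\sigma) \subseteq \inv(\pi \vee \sigma)$.

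Meet-closure in the case $a > b$ is the heart of the proof, because the weak-order meet is in general strictly smaller than the intersection of inversion sets. I would argue by contradiction: if $(b, a) \in \inv(\pi) \cap \inv(\sigma)$ yet $(b, a) \notin \inv(\pi \wedge \sigma)$, then the biclosed axiom for inversion sets forces some $c$ with $b < c < a$ having both $(b, c), (c, a) \notin \inv(\pi) \cap \inv(\sigma)$. If both these pairs are missing from $\inv(\pi)$ then $\pi$ places $b, c, a$ in that relative order, making $b$ precede $a$ and contradicting $(b, a) \in \inv(\pi)$; the same argument excludes both missing from $\inv(\sigma)$. In the remaining subcase, up to swapping $\pi$ and $\sigma$, one must have $\pi$ ordering $\{a, b, c\}$ as $a, b, c$ and $\sigma$ ordering them as $c, a, b$. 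Since $\pi, \sigma \leqslant \omega$, all three pairs $(b,c)$, $(b,a)$, $(c,a)$ are inversions of $\omega$ with $\omega\pos(a) < \omega\pos(c) < \omega\pos(b)$, and the arc $a \to b$ comes from a contact between pipes $a$ and $b$. Thus \cref{lem:3_inv} applies with $p = b$, $q = c$, $r = a$ and yields either $c \less_P b$ together with $c \less_P a$, which is violated by $\pi$, or $b \less_P c$ together with $a \less_P c$, which is violated by $\sigma$, contradicting $\pi,\sigma \in \lin(P)$ in either case.

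The main obstacle is exactly this meet-closure step; all other parts reduce to direct comparisons of inversion sets, and the role of \cref{lem:3_inv} is precisely to rule out the geometric configuration of linear extensions that would otherwise break meet-closure.
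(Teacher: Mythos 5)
Your strategy---show that $\lin(P)$ is order-convex and closed under meet and join, then take $\bigwedge\lin(P)$ and $\bigvee\lin(P)$ as the endpoints---is a genuinely different packaging from the paper's, which instead verifies the Björner--Wachs interval criterion (\cref{prop:WOIP}) for the transitive closure $\less_P$ by induction on the length of paths in $\econtact{P}$. Your convexity step is correct, and the configuration you feed to \cref{lem:3_inv} (a triple ordered one way by $\pi$ and another way by $\sigma$, all three pairs being inversions of $\omega$, with a contact between the extreme pipes) is exactly the right use of that lemma. But two of your reduction steps fail as written. The join-closure case $a<b$ is not the easy case you claim: the assertion $(a,b)\notin\inv(\omega)$ holds only for the arcs added to $P\contact$ to form $\econtact{P}$. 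A contact arc $a\to b$ with $a<b$ and $(a,b)\in\inv(\omega)$ arises whenever pipes $a$ and $b$ have a contact southwest of their crossing, i.e.\ whenever an increasing flip is available between them, which is the generic situation. For such an arc you must still rule out $(a,b)\in\inv(\pi\vee\sigma)$, where $\inv(\pi\vee\sigma)$ is the transitive closure of $\inv(\pi)\cup\inv(\sigma)$ and may strictly contain the union; this case is the mirror image (under right multiplication by the longest permutation) of your meet-closure case and needs the same non-trivial treatment.

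The second gap is in the meet-closure step itself: the claim that some single $c$ with $b<c<a$ has both $(b,c)$ and $(c,a)$ outside $\inv(\pi)\cap\inv(\sigma)$ does not follow from biclosedness. Since $\ninv(\pi\wedge\sigma)$ is the transitive closure of $\ninv(\pi)\cup\ninv(\sigma)$, what you actually obtain is a chain $b=c_0<c_1<\dots<c_m=a$ whose consecutive pairs lie in $\ninv(\pi)\cup\ninv(\sigma)$, and for $m\geqslant 3$ no single intermediate index need witness both halves. For instance, with $\pi=3412$ and $\sigma=4231$ the pair $(1,4)$ lies in $\inv(\pi)\cap\inv(\sigma)$ while $\pi\wedge\sigma=\id$, yet neither $c=2$ nor $c=3$ has both $(1,c)$ and $(c,4)$ outside the intersection. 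Repairing this forces an induction along such chains, and the intermediate pairs are in general not arcs of $\econtact{P}$, so the contact hypothesis of \cref{lem:3_inv} is unavailable for them; you would have to propagate relations of $\less_P$ instead, which is essentially the induction on path lengths in $\econtact{P}$ that the paper's proof carries out. In short, the key lemma is deployed correctly, but the reduction to it has two real holes, one in each closure argument.
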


To prove it, we will use the following classical characterization of weak order intervals:

\begin{prop}[{\cite[Thm. 6.8]{BjornerWachs}}]\label{prop:WOIP}
The set of linear extensions of a poset~$\less$ on~$[n]$ is an interval~$I$ of the weak order if for every~$1 \leqslant i < j < k \leqslant n$,
\[
i \less k \implies i \less j \text{ or } j \less k
\qquad\text{and}\qquad
i \more k \implies i \more j \text{ or } j \more k.
\]
Moreover, the inversions of~$\min(I)$ are the pairs~$i,j \in [n]$ with $i < j$ and $i \more j$, and the non-inversions of~$\max(I)$ are the pairs~$i,j \in [n]$ with $i < j$ and $i \less j$.
\end{prop}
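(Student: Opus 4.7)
The plan is to exhibit explicit candidates for the minimum and maximum of the claimed interval directly from the poset data and then verify that they bound exactly the linear extensions. Let
\[
A := \{(i,j) : 1 \leqslant i < j \leqslant n,\ i \more j\}, \qquad N := \{(i,j) : 1 \leqslant i < j \leqslant n,\ i \less j\},
\]
and set $B := \{(i,j) : 1 \leqslant i < j \leqslant n\} \setminus N$, so that $A \subseteq B$ trivially.

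The first step is to show that $A$ and $B$ are the inversion sets of some $\alpha, \beta \in \fS_n$, using the standard criterion that a set $S$ of pairs $(i,j)$ with $i<j$ is such an inversion set if and only if for every $i < j < k$ one has both \emph{transitivity} $(i,j),(j,k) \in S \Rightarrow (i,k)\in S$ and \emph{co-transitivity} $(i,k) \in S \Rightarrow (i,j) \in S \text{ or } (j,k) \in S$. For $A$, transitivity follows from transitivity of the poset relation, and co-transitivity is precisely the second WOIP hypothesis applied to $i \more k$. For $B$, transitivity is the contrapositive of the first WOIP hypothesis ($i \less k \Rightarrow i \less j$ or $j \less k$), while co-transitivity is the contrapositive of transitivity of $\less$. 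The inclusion $A \subseteq B$ then gives $\alpha \leqslant \beta$ in the weak order.

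Next, I would prove $\lin(\less) = [\alpha,\beta]$. For $\lin(\less) \subseteq [\alpha,\beta]$: if $\pi$ is a linear extension and $i < j$ satisfies $i \more j$, then $j$ precedes $i$ in $\pi$, so $(i,j) \in \inv(\pi)$; this gives $A \subseteq \inv(\pi)$, hence $\alpha \leqslant \pi$. Dually, if $i < j$ and $i \less j$ then $(i,j) \notin \inv(\pi)$, giving $\inv(\pi) \subseteq B$ and hence $\pi \leqslant \beta$. For the reverse inclusion, let $\pi \in [\alpha,\beta]$ and take any relation $a \less b$ in the poset. If $a < b$, then $(a,b) \in N$, so $(a,b) \notin B \supseteq \inv(\pi)$ and $a$ precedes $b$ in $\pi$; if $a > b$, then $b < a$ with $b \more a$, so $(b,a) \in A \subseteq \inv(\pi)$, again placing $a$ before $b$. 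Hence $\pi \in \lin(\less)$. The ``moreover'' clause is then immediate from the definitions: $\inv(\min(I)) = \inv(\alpha) = A$ and the non-inversions of $\max(I) = \beta$ are the complement of $B$, namely $N$.

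The main obstacle is purely bookkeeping: one must correctly pair up the two WOIP hypotheses and transitivity of $\less$ with the two biclosure axioms (and their contrapositives), making sure that the biclosure of $A$ pulls on different pieces of the hypothesis than the biclosure of $B$. Once this dictionary is set up, everything else reduces to unwinding definitions.
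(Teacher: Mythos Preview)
Your argument is correct. The paper does not give its own proof of this proposition: it is quoted verbatim as \cite[Thm.~6.8]{BjornerWachs} and used as a black box, so there is nothing to compare your approach against on the paper's side. What you wrote is exactly the standard self-contained proof of the Bj\"orner--Wachs criterion: identify the candidate endpoints via the biclosed sets $A$ and $B$, verify biclosure using poset transitivity together with the two WOIP implications, and then check $\lin(\less)=[\alpha,\beta]$ by unwinding definitions. The bookkeeping you flag (matching each WOIP implication and each use of transitivity of~$\less$ to the appropriate biclosure axiom for $A$ or for $B$) is handled correctly; in particular, your observation that transitivity of $B$ is the contrapositive of the first hypothesis and co-transitivity of $B$ is the contrapositive of transitivity of~$\less$ is exactly right. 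One small point you leave implicit but which is fine: once $\lin(\less)=[\alpha,\beta]$ is established, $\alpha$ and $\beta$ themselves lie in the interval and hence are linear extensions, so $\alpha=\min(I)$ and $\beta=\max(I)$ as claimed.
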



\begin{proof}[Proof of \cref{prop:fibers_intervals}]
Consider~$P$ a pipe dream of~$\Sigma_F(\omega)$ and suppose that for some pipes~$p < q < r$ we have~$p \less_P r$. We will prove by induction on the length of the shortest path from~$p$ to~$r$ in~$\econtact{P}$ that~$p \less_P q$ or~$q \less_P r$. Suppose first that this path is of length one, i.e.\ that~$p \rightarrow r$ is an arc of~$P\contact$. Then either~$\omega\pos(p) < \omega\pos(q)$, or~$\omega\pos(q) < \omega\pos(r)$, or~$\omega\pos(r) < \omega\pos(q) < \omega\pos(p)$. In the first and second cases, since~$(p,q)$ or~$(q,r)$ is in~$\ninv(\omega)$, from the definition of~$\econtact{P}$ we know that~$p \less_P q$ or~$q \less_P r$. In the third case, since there exists a contact from pipe~$p$ to pipe~$r$, \cref{lem:3_inv} tells us that either~$p \less_P q$ or~$q \less_P r$.

Suppose now that our statement is true for any distance strictly lower than~$d > 1$ and that the distance from~$p$ to~$r$ is~$d$, and denote by~$p'$ the first pipe on this path after~$p$. Then~$p \less_P p' \less_P r$ and the distance from~$p'$ to~$r$ is~$d-1$. Then:
\begin{compactitem}
\item if~$p' = q$ then~$p \less_P q$;
\item if~$p' < q$ then~$p' < q < r$ and~$p' \less_P r$ so by our induction hypothesis, either~$q \less_P r$ or~$q \more_P p' \more_P p$;
\item if~$p' > q$, then~$p < q \less_P p'$ and~$p \less_P p'$ with a path of length~1 from~$p$ to~$p'$ in~$P\contact$, so by induction hypothesis either~$p \less_P q$ or~$q \less_P p' \less_P r$.
\end{compactitem}
In all cases, the result still holds for distance~$d$. By induction, it is thus always true. 

A symmetrical reasoning proves that if~$r \less_P p$, then~$r \less_P q$ or~$q \less_P p$. We can now apply \cref{prop:WOIP} to obtain that the set~$\lin(P)$ is an interval. Since all congruence classes of~$\equiv_{F,\omega}$ are defined as the linear extension set of some pipe dream in~$\Sigma_F(\omega)$, this concludes the proof.
\end{proof}

Note that \cref{prop:WOIP} tells us that the equivalence classes of~$\equiv_{F,\omega}$ have a minimal and a maximal element, but also a characterization of these: the inversions of~$\min(\lin(P))$ are exactly the pairs~$p < q$ such that~$q \less_P p$, and the noninversions of~$\max(\lin(P))$ are the pairs~$p < q$ such that~$p \less_P q$.

\begin{prop}\label{prop:minmax}
Let~$C,C'$ be equivalence classes of~$\equiv_\omega$ and consider~$\pi \in C$ and~$\pi' \in C'$. Then~$\pi \leqslant \pi'$ implies that~$\max(C) \leqslant \max(C')$ and~$\min(C) \leqslant \min(C')$.
\end{prop}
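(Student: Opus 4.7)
The plan is to reduce to a cover relation $\pi \lessdot \pi'$, since the general statement follows from the cover case by transitivity along any saturated chain. So assume $\pi = UabV \lessdot UbaV = \pi'$ with $a < b$. If $C = C'$ the result is trivial; otherwise, applying \cref{lem:cover_flip} from $\pi'$ down to $\pi$ forces $\econtact{P'}$ to contain the arc $b \to a$, and the pipe dream $P$ indexing $C$ is obtained from $P'$ (indexing $C'$) by flipping the northeastmost contact $c$ between pipes $a$ and $b$ in $P'$. Denote by $x$ the unique crossing between $a$ and $b$ in $P'$ ($P'$ being reduced); after the flip, $x$ is a contact of $P$ with arc $a \to b$ and $c$ is a crossing of $P$.

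By the description of minima and maxima of weak order intervals noted just after the proof of \cref{prop:fibers_intervals}, the inversions of $\min(\lin(P))$ are the pairs $i < j$ with $j \less_P i$ and the noninversions of $\max(\lin(P))$ are those with $i \less_P j$, and likewise for $P'$. Therefore $\max(C) \leqslant \max(C')$ is equivalent to the implication ``for $i < j$, $i \less_{P'} j$ implies $i \less_P j$'', and $\min(C) \leqslant \min(C')$ is equivalent to ``for $i < j$, $j \less_P i$ implies $j \less_{P'} i$''.

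To prove these, I would compare $\econtact{P}$ with $\econtact{P'}$: the added arcs from noninversions of $\omega$ are identical, and contact arcs coming from cells outside the ``flip region''---the cells traversed by pipes $a$ and $b$ between $x$ and $c$---also coincide. Inside the flip region the trajectories of $a$ and $b$ are swapped, so contact arcs between $a$ and $b$ reverse direction (all $b \to a$ in $\econtact{P'}$, all $a \to b$ in $\econtact{P}$), and contact arcs between $a$ or $b$ and a third pipe transform by the $a \leftrightarrow b$ exchange. Given a directed path from $i$ to $j$ in $\econtact{P'}$, I would build a path in $\econtact{P}$ by relabelling each flip-region arc as above and stitching the segments together with the new arc $a \to b$ of $\econtact{P}$ wherever a $b$-labelled segment must be rejoined to an $a$-labelled continuation. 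The second implication is handled symmetrically, now using that arcs between $a$ and $b$ in $\econtact{P'}$ all go $b \to a$.

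The main obstacle I expect is proving that the local substitutions always splice into a valid directed path, especially at the boundary of the flip region where the same pipe labels different cells in $P$ and $P'$; I anticipate needing \cref{lem:elbows_rectangle} applied to flip-region elbow configurations to produce the auxiliary arcs required to bridge any gap in the translated path.
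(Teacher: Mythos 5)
Your reduction to a single cover $\pi = UabV \lessdot UbaV = \pi'$, your reformulation of the two inequalities through \cref{prop:WOIP} (for $i<j$: $i \less_{P'} j \Rightarrow i \less_P j$ for the maxima, $j \less_P i \Rightarrow j \less_{P'} i$ for the minima), and your identification of $P$ as the flip of $P'$ at the northeastmost contact between $a$ and $b$ are all correct. But note that this is not the paper's route: the paper also reduces to a cover $\pi' = \pi\tau_i$, yet then argues by induction on the weak-order distance from $\pi$ to $\max(C)$, picking a second cover $\sigma = \pi\tau_j$ of $\pi$ inside $C$ and running a case analysis on the relative positions of $i$ and $j$ that uses \cref{lem:cover_flip} and \cref{lem:3_inv} to produce $\sigma' > \sigma$ in $C'$. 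It never attempts a one-shot comparison of the two contact graphs across the flip.

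The gap in your version sits exactly where you flag it, and it is not a technicality. After the flip, the only arcs between $a$ and $b$ in $\econtact{P}$ go $a \to b$; no path $b \to \cdots \to a$ can exist in $\econtact{P}$, since with $a \to b$ it would violate strong acyclicity. Now consider a path of $\econtact{P'{}}$ through a vertex $v \in \{a,b\}$ whose incoming arc is unaffected by the flip (e.g.\ a noninversion arc $u \to b$, which stays $u \to b$) and whose outgoing arc comes from a flip-region contact (e.g.\ $b \to w$, which becomes $a \to w$ in $P\contact$). Your translated path then arrives at $b$ but must restart at $a$; the bridge you propose, the new arc $a \to b$ of $P\contact$, points the wrong way, and a bridge $b \to a$ is impossible. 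The symmetric situation (arrive at $a$ via a swapped arc $u \to a \mapsto u \to b$... rather, arrive at $b$ and continue from $a$) arises just as easily around the reversed arc $b \to a$ itself. So the splicing recipe fails on a concretely identifiable class of paths, and what is needed to repair it --- showing $u \less_P w$ when $u \less_P b$ and $a \less_P w$ --- is a global statement about $P$ that \cref{lem:elbows_rectangle} applied locally to the flip region will not give you; one is pushed back towards the three-pipe analysis of \cref{lem:3_inv} and an induction of the kind the paper actually performs. As it stands, the proposal establishes the setup but not the heart of the proposition.
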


\begin{proof}
We prove the statement for the maximums, the proof for the minimums is symmetrical.
Observe first that we can assume that~$\pi$ is covered by~$\pi'$ in weak order, so that we write~${\pi' = \pi \tau_i}$ for some simple transposition~$\tau_i := (i \; i+1)$.
The proof now works by induction on the weak order distance between~$\pi$ and~$\max(C)$.
If~$\pi = \max(C)$, the result is immediate as~${\max(C) = \pi < \pi' \le \max(C')}$.
Otherwise, must be covered~$\pi$ by a permutation~$\sigma$ in the class~$C$, and we write~$\sigma = \pi \tau_j$ for some simple transposition~$\tau_j$.
Let~$P,P' \in \Sigma(\omega)$ be such that~$C = \lin(P)$ and~$C' = \lin(P')$.
We now distinguish five cases, according to the relative positions of~$p$ and~$q$:
\begin{enumerate}[(1)]
\item If~$i > j+1$, then~$\pi = UpqVrsW$, $\pi' = UpqVsrW$ and~$\tau = UqpVrsW$ for some~$p < q$~and~$r < s$. Define~$\sigma' := \pi \tau_i \tau_j = \pi \tau_j \tau_i = UqpVsrW$. By \cref{lem:cover_flip}, there is no arc~$p \to q$ in~$\econtact{P}$ (since~$\pi$ and~$\sigma$ both belong to~$C$), and~$\econtact{P}$ and~$\contact{P'{}}$ can only differ by arcs incident~to~$r$~or~$s$. Hence, there is no arc~$p \to q$ in~$\econtact{P'{}}$. We thus obtain again by \cref{lem:cover_flip} that~$\sigma' \in \lin(P') = C'$.
\item If~$i = j+1$, then~$\pi = UpqrV$, $\pi' = UprqV$ and~$\sigma = UqprV$ for some~$p < q < r$. Define~$\sigma' := \pi \tau_i \tau_j \tau_i = \pi \tau_j \tau_i \tau_j = UrqpV$. Since~$\pi \in \lin(P)$, we have~$p \not\more_P q$ and~$q \not\more_P r$, so that there is no arc~$p \to r$ in~$\econtact{P}$ by \cref{lem:3_inv}. By \cref{lem:cover_flip}, there is no arc~$p \to q$ in~$\econtact{P}$, and~$\econtact{P}$ and~$\econtact{P'{}}$ can only differ by arcs incident to~$q$ or~$r$. We thus obtain that there is no arc~$p \to q$ nor~$p \to r$ in~$\econtact{P'{}}$. Consequently, again by \cref{lem:cover_flip}, both~$\pi'\tau_j$ and~$\sigma' = \pi'\tau_j \tau_i$ belong to~$\lin(P') = C'$.
\item If~$i = j$, then~$\pi' = \sigma$ is in~$C$, so that~$C = C'$ and there is nothing to prove.
\item If~$i = j-1$, we proceed similarly as in Situation~(2).
\item If~$i < j-1$, we proceed similarly as in Situation~(1).
\end{enumerate}
In all cases, we found~$\sigma' > \sigma$ with~$\sigma' \in C'$.
Since~$\sigma < \sigma'$ with~$\sigma \in C$ and~$\sigma' \in C'$, and since~$\sigma$ is closer to~$\max(C)$ than~$\pi$, we obtain that ${\max(C) < \max(C')}$ by induction hypothesis.
\end{proof}

We have thus proved that the top and bottom projections defined in \cref{thm:lattice_congruence} are order preserving.

\begin{proof}[Proof of \ref{thm:lattice_congruence}]
The theorem follows immediately by combining \cref{thm:congruence_charac}, \cref{prop:fibers_intervals} and \cref{prop:minmax}.
\end{proof}

\subsection{The insertion map and the acyclic order}

Let us go back to \cref{thm:lin_union}: since the sets of linear extensions of pipe dreams of~$\Sigma_F(\omega)$ for some alternating shape~$F$ and some permutation~$\omega$ sortable on~$F$ is a partition of~$[\id, \omega]$, it means that for any~$\pi \in [\id, \omega]$, there exists exactly one~$P \in \Sigma_F(\omega)$ such that~$\pi \in \lin(P)$. We can thus define a map from~$[\id, \omega]$ to~$\Sigma_F(\omega)$.

\begin{defi}
Let~$F$ be an alternating shape and~$\omega$ be sortable on~$F$. We denote by~$\ins_{F,\omega}:[\id, \omega] \mapsto \Sigma_F(\omega)$ the map such that for any~$\pi \in [\id, \omega]$, we have~$\pi \in \lin(\ins_{F,\omega}(\pi))$.
\end{defi}

We note that by \cref{lem:cover_flip}, for any~$\pi_1, \pi_2 \in [\id, \omega]$, if~$\pi_1 \leqslant \pi_2$ then~$\ins_{F,\omega}(\pi_1) \leqslant \ins_{F,\omega}(\pi_2)$ in the increasing flip order on~$\Pi_F(\omega)$. However, the image of the weak order by~$\ins_{F,\omega}$ on~$\Sigma_F(\omega)$ is not the ascending flip order, as illustrated in \cref{ex:bad_flip}: there is an ascending flip from~$P_1$ to~$P_2$, but the only linear extension~$132$ of~$P_1$ is not comparable in the weak order to the only linear extension~$231$ of~$P_2$. We thus have to define a new order relation on~$\Sigma_F(\omega)$.

\begin{figure}
\begin{center}
\includegraphics[scale=1]{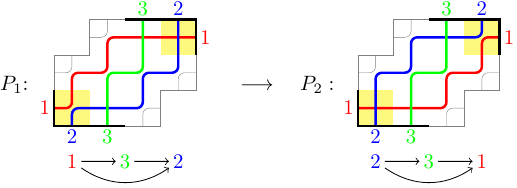}
\end{center}
\caption{An increasing flip between two strongly acyclic pipe dreams that is not the image of a weak order relation.}
\label{ex:bad_flip}
\end{figure}

\begin{defi}
For any~$P,P' \in \Sigma_F(\omega)$, we say that~$P \leqslant P'$ in the \emph{acyclic order} if there exists~$\pi \in \lin(P)$ and~$\pi' \in \lin(P')$ such that~$\pi \leqslant \pi'$ in the weak order.
\end{defi}

This is obviously by definition the image of the weak order by~$\ins_{F,\omega}$. The following proposition gives alternative characterizations of this order that does not use linear extensions.

\begin{prop}\label{prop:def_acyclic_order}
For any pipe dreams~$P,P' \in \Sigma_F(\omega)$, the following statements are equivalent:
\begin{compactenum}[(1)]
\item there exists~$\pi \in \lin(P)$ and~$\pi' \in \lin(P')$ such that~$\pi \leqslant \pi'$;
\item the minimal (resp. maximal) linear extensions~$\pi$ of~$P$ and~$\pi'$ of~$P'$ satisfy~$\pi \leqslant \pi'$;
\item there are no pipes~$p < q$ such that~$p \more_P q$ and~$p \less_{P'} q$;
\item for all pipes~$p < q$, if~$p \more_P q$ then~$p \more_{P'} q$;
\item for all pipes~$p < q$, if~$p \less_{P'} q$ then~$p \less_P q$.
\end{compactenum}
\end{prop}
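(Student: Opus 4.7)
The plan is to reduce each of the statements (1)--(5) to a containment between inversion sets of the minimal or maximal linear extensions of $P$ and $P'$, using the description of those extensions provided by \cref{prop:WOIP}; the only substantive input beyond this dictionary is the monotonicity statement \cref{prop:minmax}.

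By \cref{prop:fibers_intervals}, for every $Q \in \Sigma_F(\omega)$ the set $\lin(Q)$ is a weak-order interval $[m_Q, M_Q]$, and the characterization of its endpoints in \cref{prop:WOIP} gives
\[
\inv(m_Q) = \{(p,q) \mid p<q \text{ and } p \more_Q q\} \quad \text{and} \quad \ninv(M_Q) = \{(p,q) \mid p<q \text{ and } p \less_Q q\},
\]
from which $\inv(M_Q)$ is obtained by complementing inside $\{(p,q) \mid p<q\}$. Plugging these expressions into the characterization $\alpha \leqslant \beta \iff \inv(\alpha) \subseteq \inv(\beta)$ of the weak order, I read directly that $m_P \leqslant m_{P'}$ unfolds as~(4), that $M_P \leqslant M_{P'}$ unfolds as~(5), and that $m_P \leqslant M_{P'}$ unfolds as~(3).

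Statement~(1) is the same as $m_P \leqslant M_{P'}$: the forward direction uses $m_P \leqslant \pi \leqslant \pi' \leqslant M_{P'}$ for any witnesses $\pi, \pi'$, and for the converse one chooses $\pi := m_P$ and $\pi' := M_{P'}$. Hence (1)$\iff$(3). The equivalence (1)$\iff$(2) splits into the immediate (2)$\Rightarrow$(1) and its converse, which is exactly \cref{prop:minmax} applied to the equivalence classes $C := \lin(P)$ and $C' := \lin(P')$ of $\equiv_{F,\omega}$. Since (2) is the conjunction of $m_P \leqslant m_{P'}$ and $M_P \leqslant M_{P'}$, the three translations above yield (2)$\iff$(4)$\wedge$(5), which closes the loop; the implications (4)$\Rightarrow$(3) and (5)$\Rightarrow$(3) can also be checked directly from the acyclicity of $\econtact{P}$ and $\econtact{P'}$.

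The only nontrivial ingredient is the appeal to \cref{prop:minmax}, already established in the previous subsection; everything else is a routine dictionary between the partial orders $\less_P, \less_{P'}$ and inversion sets of permutations. The step that requires the most care is reading statement~(5) out of the comparison $M_P \leqslant M_{P'}$, since this forces the use of the complement form of $\inv(M_Q)$ rather than $\ninv(M_Q)$ itself.
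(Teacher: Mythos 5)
Your proof is correct and uses essentially the same two ingredients as the paper's: \cref{prop:minmax} for the equivalence of (1) and (2), and the endpoint descriptions from \cref{prop:WOIP} together with the inversion-set definition of the weak order to translate (2) and (3) into (4) and (5). Your arrangement is slightly different (identifying (1) with $m_P \leqslant M_{P'}$ and closing the loop via the acyclicity-based implications (4)$\Rightarrow$(3) and (5)$\Rightarrow$(3)), but this is a more careful unpacking of the same argument rather than a different route.
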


\begin{proof}
Both versions of item~$(2)$ obviously imply item~$(1)$, and from \cref{prop:minmax} the converse is true, so~$(1) \iff (2)$. The equivalence~$(2) \iff (3) \iff (4) \iff (5)$ comes from the description of the inversions of the bottom elements and noninversions of the top elements of~$\lin(P)$ and~$\lin(P')$ given by \cref{prop:WOIP}, and the definition of the weak order in term of inclusion of inversion sets.
\end{proof}

We know very well the structure of the weak order on~$[\id, \omega]$ but very little yet of its image, the acyclic order. In particular, we can characterize the covers of the weak order, and we know that it is a lattice. The following two results will show how those properties translate to the acyclic order.

\begin{prop}\label{prop:acyclic_covers}
For any two distinct pipe dreams~$P_1,P_2 \in \Sigma_F(\omega)$, $P_2$ covers~$P_1$ in the acyclic order if and only if there exists~$\pi_1 \in \lin(P_1)$ and~$\pi_2 \in \lin(P_2)$ such that~$\pi_1 \lessdot \pi_2$ is a cover of the weak order.
\end{prop}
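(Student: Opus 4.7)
The plan is to prove both implications by exploiting the lattice-congruence structure established in the previous subsection: $\equiv_{F,\omega}$ is a lattice congruence of $[\id, \omega]$ (\cref{thm:lattice_congruence}), each equivalence class $\lin(P)$ is a weak order interval $[\min \lin(P), \max \lin(P)]$ (\cref{prop:fibers_intervals}), and the acyclic order is characterized by comparing these minima and maxima (\cref{prop:minmax,prop:def_acyclic_order}).

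For the direction $(\Rightarrow)$, I would start from any witness $\sigma_1 \in \lin(P_1)$ and $\sigma_2 \in \lin(P_2)$ with $\sigma_1 \leqslant \sigma_2$ (which exists by definition of the acyclic order) and choose a saturated chain of weak order covers $\sigma_1 = \rho_0 \lessdot \rho_1 \lessdot \cdots \lessdot \rho_k = \sigma_2$. Let $Q_i \in \Sigma_F(\omega)$ be the unique pipe dream with $\rho_i \in \lin(Q_i)$. From $\rho_i \leqslant \rho_{i+1}$ it follows that $Q_i \leqslant Q_{i+1}$ in the acyclic order, and from $\sigma_1 \leqslant \rho_i \leqslant \sigma_2$ that $P_1 \leqslant Q_i \leqslant P_2$. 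Since $P_2$ covers $P_1$, each $Q_i$ equals $P_1$ or $P_2$; because the sequence is weakly increasing in the two-element chain $\{P_1, P_2\}$, it transitions from $P_1$ to $P_2$ at exactly one index $j$, and then $\pi_1 := \rho_j$ and $\pi_2 := \rho_{j+1}$ provide the desired weak order cover between representatives.

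For the direction $(\Leftarrow)$, assume $\pi_1 \lessdot \pi_2$ with $\pi_i \in \lin(P_i)$ and $P_1 \neq P_2$; then certainly $P_1 < P_2$ in the acyclic order. I would argue by contradiction: suppose some $P \in \Sigma_F(\omega)$ satisfies $P_1 < P < P_2$, and write the three classes as intervals $[m_1, M_1]$, $[m, M]$ and $[m_2, M_2]$. \cref{prop:minmax} gives $m_1 \leqslant m \leqslant m_2$ and $M_1 \leqslant M \leqslant M_2$, and each of these inequalities must be strict since distinct classes are disjoint by \cref{thm:lin_union}. The element $\pi_1 \vee m$, computed in the weak order lattice, satisfies $\pi_1 \leqslant \pi_1 \vee m \leqslant \pi_2$ (using $m \leqslant m_2 \leqslant \pi_2$), so the cover $\pi_1 \lessdot \pi_2$ forces $\pi_1 \vee m \in \{\pi_1, \pi_2\}$. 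If $\pi_1 \vee m = \pi_1$, then $m_1 \leqslant m \leqslant \pi_1 \leqslant M_1$ places $m$ in $\lin(P_1) \cap \lin(P)$, contradicting the partition. If $\pi_1 \vee m = \pi_2$, I would invoke the lattice congruence property: since $m \equiv_{F,\omega} M$ (both lie in $\lin(P)$) and $\pi_1 \equiv_{F,\omega} \pi_1$, one has $\pi_2 = \pi_1 \vee m \equiv_{F,\omega} \pi_1 \vee M = M$, where the last equality uses $\pi_1 \leqslant M_1 \leqslant M$. This places $\pi_2 \in \lin(P) \cap \lin(P_2)$, again a contradiction.

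The main obstacle is this last case of the backward direction: set-theoretic disjointness of the fibers alone does not rule out $\pi_1 \vee m = \pi_2$, and the essential trick is to use the lattice congruence to slide the join from $m$ up to $M$ inside the class $\lin(P)$, at which point $\pi_1 \vee M$ collapses to $M$ and lands in the wrong fiber. Once this case is handled, the rest of the argument is a clean bookkeeping of saturated chains and fiber intervals.
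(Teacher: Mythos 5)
Your proof is correct and follows essentially the same route as the paper: a saturated chain of weak-order covers routed through the fibers for the forward direction, and for the backward direction the observation that $\pi_1 \vee \min(\lin(P))$ is squeezed into $\{\pi_1,\pi_2\}$ for any intermediate $P$, with the two cases each contradicting the partition of \cref{thm:lin_union}. The only divergence is in the case $\pi_1 \vee m = \pi_2$, where you invoke the lattice congruence to slide the join from $m$ to $M$; the paper gets $\pi_2 \in \lin(P)$ directly from $m \leqslant \pi_2 = \pi_1 \vee m \leqslant M$ (since both $\pi_1 \leqslant M_1 \leqslant M$ and $m \leqslant M$), so your detour is valid but not needed.
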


\begin{proof}
Suppose that~$P_2$ covers~$P_1$ and choose two permutations~$\pi_1 < \pi_2$ respectively in~$\lin(\pi_1)$ and~$\lin(P_2)$, then there exist~$\pi_1 = \sigma_0 \lessdot \sigma_1 \lessdot \ldots \lessdot \sigma_k = \pi_2$ a maximal chain from~$\pi_1$ to~$\pi_2$ for some~$k \geqslant 1$. If for any~$0 \leqslant j \leqslant k$ we have~$\ins_{F,\omega}(\sigma_j)= P \notin \{ P_1,P_2 \}$, then by definition of the acyclic order~$P_1 < P < P_2$ and~$P_2$ does not cover~$P_1$; therefore, the whole chain is in~$\lin(P_1) \cup \lin(P_2)$. Since it starts in~$\lin(P_1)$ and ends in~$\lin(P_2)$, there exists~$j$ such that~$\sigma_j \in \lin(P_1)$ and~$\sigma_{j+1} \in \lin(P_2)$, and~$\sigma_{j+1}$ covers~$\sigma_j$.

Suppose now that~$\pi_1 \lessdot \pi_2$ is a cover of the weak order and~$P_1 := \ins_{F,\omega}(\pi_1)$ is distinct from~$P_2 := \ins_{F,\omega}(\pi_2)$. By definition of the acyclic order~$P_1 < P_2$; consider~$P \in \Sigma_F(\omega)$ such that~$P_1 \leqslant P \leqslant P_2$ and denote by~$\pi\fbot,\pi\ftop$ the minimal and maximal elements of~$\lin(P)$. Then since~$P \leqslant P_2$ we know that~$\pi\fbot \leqslant \pi_2$ and we chose~$\pi_1 \leqslant \pi_2$, so~$\pi_1 \leqslant \pi_1 \vee \pi\fbot \leqslant \pi_2$; since~$\pi_2$ covers~$\pi_1$, this means that~$\pi_1 \vee \pi\fbot$ is either~$\pi_1$ or~$\pi_2$. In the first case, this means that~$\pi\fbot \leqslant \pi_1$ and so~$\pi_1 \in [\pi\fbot,\pi\ftop] = \lin(P)$, so from \cref{thm:lin_union} this means that~$P = P_1$. In the second case, since~$P_1 \leqslant P$ we know that~$\pi_1 \leqslant \pi\ftop$ and by definition~$\pi\fbot \leqslant \pi\ftop$ so~$\pi_2 = \pi_1 \vee \pi\fbot \leqslant \pi\ftop$, thus~$\pi_2 \in [\pi\fbot,\pi\ftop] = \lin(P)$ and so in the same way~$P = P_2$. Thus~$P_1 < P < P_2$ is not possible, and so~$P_2$ covers~$P_1$ in the weak order.
\end{proof}

This results shows that the covers of the weak order give exactly the covers of the acyclic order.

\begin{rem}
The flips starting from~$P \in \Sigma_F(\omega)$ that are covers of the acyclic orders are the one between two pipes~$p$ and~$q$ such that~$(p,q)$ is an edge of the contact graph~$P\contact$, but there is no other path from~$p$ to~$q$ in~$P\contact$. This corresponds to the rays of the root cone of the subword complex facet associated to~$P$, and so the flips that are covers are the ones corresponding to edges of the brick polyhedron of the subword complex, as described in \cite{JahnStump}.
\end{rem}

\begin{theorem}
For~$F$ an alternating shape and~$\omega \in \fS_n$ sortable on~$F$, the acyclic order defines a lattice on~$\Sigma_F(\omega)$.
\end{theorem}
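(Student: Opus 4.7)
The plan is to deduce this theorem directly from \cref{thm:lattice_congruence} via the standard fact that a lattice congruence induces a lattice structure on the set of equivalence classes.

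First I would recall (or quickly verify) the classical construction of the quotient lattice: if $\equiv$ is a lattice congruence on a lattice $(L,\leqslant,\wedge,\vee)$, then the set $L/\equiv$ of equivalence classes carries a lattice structure whose meet and join are $[x] \wedge [y] := [x \wedge y]$ and $[x] \vee [y] := [x \vee y]$ (well-defined precisely because $\equiv$ respects meets and joins), and whose partial order is characterized by $[x] \leqslant [y]$ iff there exist representatives $x' \in [x]$ and $y' \in [y]$ with $x' \leqslant y'$ (equivalently, $[x \vee y] = [y]$).

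Next I would apply this to our situation. By \cref{thm:lattice_congruence}, $\equiv_{F,\omega}$ is a lattice congruence of the interval $[\id,\omega]$, so the quotient $[\id,\omega]/\equiv_{F,\omega}$ is a lattice. By \cref{thm:lin_union}, the map $P \mapsto \lin(P)$ is a bijection from $\Sigma_F(\omega)$ onto the set of equivalence classes of $\equiv_{F,\omega}$. It remains to check that, under this bijection, the acyclic order on $\Sigma_F(\omega)$ coincides with the quotient order on $[\id,\omega]/\equiv_{F,\omega}$; but this is exactly the content of the definition of the acyclic order (item~(1) of \cref{prop:def_acyclic_order}): $P \leqslant P'$ iff there exist $\pi \in \lin(P)$ and $\pi' \in \lin(P')$ with $\pi \leqslant \pi'$.

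Combining these observations, the bijection $P \mapsto \lin(P)$ is a poset isomorphism between $(\Sigma_F(\omega), \leqslant)$ and the quotient lattice, transporting the lattice structure back to $\Sigma_F(\omega)$. Concretely, the meet and join in $\Sigma_F(\omega)$ can be described as $P \wedge P' = \ins_{F,\omega}(\pi \wedge \pi')$ and $P \vee P' = \ins_{F,\omega}(\pi \vee \pi')$ for any choice of $\pi \in \lin(P)$ and $\pi' \in \lin(P')$, with the congruence property of $\equiv_{F,\omega}$ guaranteeing that these outputs do not depend on the chosen representatives. There is no real obstacle here — everything is set up by the preceding sections. The only subtlety worth spelling out is the verification that the acyclic order as defined truly matches the quotient order (as opposed to just being a coarser relation obtained by projecting weak-order comparisons), but \cref{prop:def_acyclic_order} already provides this through its equivalence with the min/min and max/max comparisons, which are precisely the criteria one uses to detect the quotient order.
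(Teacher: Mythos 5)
Your proof is correct, but it follows a genuinely different route from the paper. You invoke the general fact that a lattice congruence induces a quotient lattice, then transport that structure to $\Sigma_F(\omega)$ via the bijection $P \mapsto \lin(P)$ of \cref{thm:lin_union}, using the standard characterization of the quotient order ($[x] \leqslant [y]$ iff some representatives are comparable) to match it with the acyclic order of \cref{prop:def_acyclic_order}. The paper instead never explicitly passes through the quotient construction: it directly exhibits candidates ${P\fbot = \ins_{F,\omega}(\pi_1\ftop \wedge \pi_2\ftop)}$ and ${P\ftop = \ins_{F,\omega}(\pi_1\fbot \vee \pi_2\fbot)}$ and verifies the universal properties by hand, using the min/max characterization of the acyclic order (item~(2) of \cref{prop:def_acyclic_order}) together with the order-preservation of $\ins_{F,\omega}$. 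Your argument is shorter and makes the logical dependence on \cref{thm:lattice_congruence} explicit, at the cost of relying on the (classical, but unproved in the paper) quotient-lattice fact and on the identification of the acyclic order with the quotient order, which you do justify correctly. The paper's hands-on computation buys explicit formulas for the meet and join in terms of the extremal linear extensions and is self-contained given the preceding propositions. Both are valid; your concluding formulas $P \wedge P' = \ins_{F,\omega}(\pi \wedge \pi')$ for arbitrary representatives agree with the paper's, since the congruence property makes the class of $\pi \wedge \pi'$ independent of the representatives chosen.
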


\begin{proof}
Consider~$P_1,P_2 \in \Sigma_F(\omega)$ and write~$\lin(P_1) = [\pi_1\fbot,\pi_1\ftop]$ and~$\lin(P_2) = [\pi_2\fbot,\pi_2\ftop]$. We call~$P\fbot = \ins_{F,\omega}(\pi_1\ftop \wedge \pi_2\ftop)$ and~$P\ftop = \ins_{F,\omega}(\pi_1\fbot \vee \pi_2\fbot)$; by definition, it is clear that~$P\fbot \leqslant P_1,P_2$ and~$P\ftop \geqslant P_1,P_2$. Then for~$P \in \Sigma_F(\omega)$, if~$P \leqslant P_1$ and~$P \leqslant P_2$ then for~$\pi = \min(\lin(P))$ we know that~$\pi \leqslant \pi_1\ftop$ and~$\pi \leqslant \pi_2\ftop$, so~$\pi \leqslant \pi_1\ftop \wedge \pi_2\ftop$ and so~$P \leqslant P\fbot$; therefore~$P\fbot$ is the maximum of all pipe dreams below both~$P_1$ and~$P_2$, and so~$P\fbot = P_1 \wedge P_2$. A similar reasoning proves that~$P\ftop = P_1 \vee P_2$, and so any pair of elements of~$\Sigma_F(\omega)$ has both a join and a meet and the acyclic order defines a lattice on~$\Sigma_F(\omega)$.
\end{proof}

\subsection{Two algorithms}

The following section will give two ways of computing~$\ins_{F,\omega}(\pi)$ for~$\omega$ sortable on~$F$ and~$\pi \leqslant \omega$: one by filling each cell of the shape from south-west to north-east, and one by determining the path of each pipe one by one.

The sweeping algorithm creates a pipe dream by sweeping the alternating shape from southwest to northeast; this allows us to know which pipes are entering a celle from the west and the south when considering that cell, as the trajectory of those pipes from their starting point to that cell is already fully determined. We then choose to fill each cell with a contact \elbow{} or a cross \cross{} depending on the relative positions of the entering pipes in~$\pi$ and~$\omega$: with~$p$ at the west and~$q$ at the south, the two pipes can only cross if~$(p,q) \in \inv(\omega)$, in which case they cross as soon as they meet if~$\pi\pos(p) > \pi\pos(q)$ and as late as possible otherwise.

\begin{algo}[Sweeping algorithm]\label{algo:gpd_sweep}
For any~alternating shape~$F$ and permutations~$\pi \leqslant \omega$ with~$\omega$ sortable on~$F$, the pipe dream~$\ins_{F,\omega}(\pi)$ can be constructed by sweeping~$F$ from southwest to northeast. We place a crossing \cross{} when sweeping cell~$c$ if and only if pipe~$p$ arriving horizontally and pipe~$q$ arriving vertically in~$c$ satisfy the following two statements:
\begin{compactitem}
\item $(p,q)$ is an inversion of~$\omega$;
\item $\pi\pos(p) > \pi\pos(q)$, or~$c$ is the last possible crossing point for pipes~$p$ and~$q$.
\end{compactitem}
\end{algo}

We note that determining if a cell~$c$ is the last possible crossing point for two pipes can be done by computing the Demazure product of the remaining cells (excluding~$c$). If this product is greater in the strong Bruhat order than the suffix of~$\omega$ that still needs to be written, then the current cell is not the last possible crossing point for~$p$ and~$q$; otherwise, it is and the cell needs to contain a crossing.

This algorithm is equivalent to the sweeping algorithm on Coxeter subword complexes given and proven in \cite{CartierPilaud}; it is also a variation on algorithm 2.12 given in \cite{JahnStump}. We give an example in \ref{ex:sweeping_algorithm}.

\begin{ex}
The execution of \cref{algo:gpd_sweep} with~$\omega = 23145$ and~$\pi = 21345$ is drawn in \cref{ex:sweeping_algorithm}. On the first step, the incoming pipes of the bottom cell are~$4$ (west) and~$5$ (south); since~$\omega\pos(4) < \omega\pos(5)$, we fill this cell with a contact~\elbow. On the second step, the incoming pipes of the considered are~$1$ and~$2$; since~$\omega\pos(2) < \omega\pos(1)$ and~$\pi\pos(1) > \pi\pos(2)$, we fill this cell with a cross \cross. On the third step, the incoming pipes are~$1$ and~$3$; since~$\pi\pos(1) < \pi\pos(3)$ and those two pipes can still cross later, we fill this cell with a contact \elbow. Finally, on the fifth step, the incoming pipes are once again~$1$ and~$3$ but this time it is the last possible crossing point for them, so we fill this cell with a cross \cross.
\end{ex}

\begin{figure}
\begin{center}
\includegraphics[scale=1]{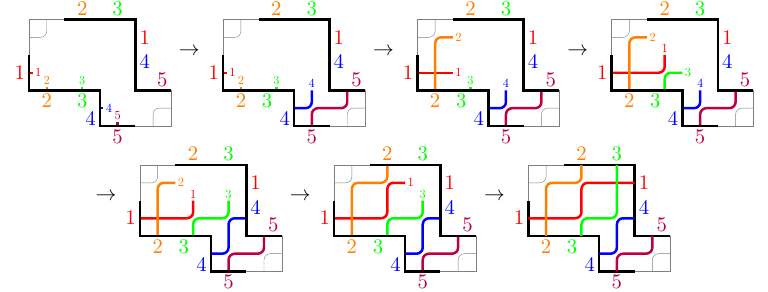}
\end{center}
\caption{Executing the sweeping algorithm for~$\omega=23145$ and~$\pi=21345$.}
\label{ex:sweeping_algorithm}
\end{figure}

The insertion algorithm, on the other hand, creates a pipe dream by determining the full trajectory of each pipe in the order given by~$\pi$, and it does so by keeping track of cells containing only the NW elbow of a contact and completing them with the next pipes.

\begin{algo}[Insertion algorithm]\label{algo:gpd_insertion}\index{pipe dreams!insertion algorithm}
For any~alternating shape~$F$ and permutations~$\pi \leqslant \omega$ with~$\omega$ sortable on~$F$, the pipe dream~$\ins_{F,\omega}(\pi)$ can be obtained by inserting each pipe in the order given by~$\pi$. At step~$i$, we insert a pipe starting on step~$\pi(i)$ of~$\pstart{F}$, ending on step~$\omega\pos(\pi(i))$ of~$\pend{F}$, and whose northeast elbows are precisely completing all the previously created southwest elbows in the zone of pipe~$\pi(i)$.
\end{algo}

This algorithm, illustrated in \cref{ex:insertion_algorithm}, is a generalization to alternating shapes of the insertion algorithm in triangular pipe dreams given in \cite{CartierPilaud}. The proof that it gives a reduced pipe dream contained in~$F$ is similar in spirit, though some care must be taken to show that no matter the starting and ending directions of a pipe, its insertion will never create a cell containing only the SE elbow of a contact. Once this is proven, checking that the result is a pipe dream on~$F$ that has~$\omega$ as an exit permutation and~$\pi$ as a linear extension follows the same proof sketch as the one used in \cite{CartierPilaud}.

\begin{figure}
\begin{center}
\includegraphics[scale=1]{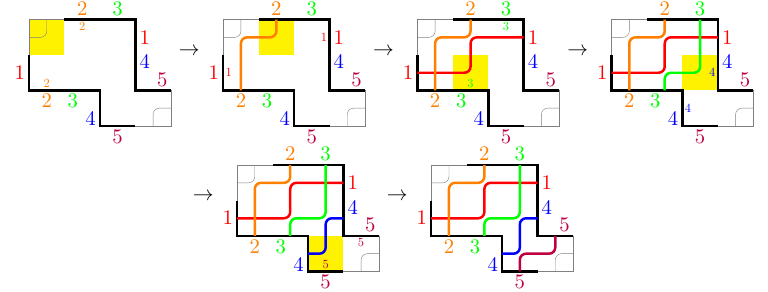}
\end{center}
\caption{The insertion algorithm on a~5-shape with~$\omega = 23145$ and~$\pi=21345$.}
\label{ex:insertion_algorithm}
\end{figure}

\section{Complete shapes}

\label{sec:cshapes}

In the previous section, we showed that for any alternating shape and any permutation sortable on this shape, the map~$\ins_{F,\omega}$ is a lattice morphism from the weak order interval~$[\id, \omega]$ to the acyclic order on the set of strongly acyclic pipe dreams~$\Sigma_F(\omega)$. We also noted that the covers of this poset are given by the edges of the brick polytope on the subword complex~$\SC(F,\omega)$ between strongly acyclic facets. However, as some acyclic pipe dreams are not strongly acyclic, like the one in \cref{ex:bad_fiber}, the Hasse diagram of this poset is not always an orientation of the graph of the brick polytope: some vertices of the polytope may be missing.

In this section, we will provide a sufficient condition on alternating shapes so that any acyclic pipe dreams will also be strongly acyclic. This means that acyclic orders on those shapes will have the graph of their brick polytope as their Hasse diagrams.

\begin{defi}
An alternating shape is \emph{complete} if~$\omega_0 = n \: (n-1) \: \ldots \: 2 \: 1$ the longest permutation of~$\fS_n$ is sortable on it.
\end{defi}

\begin{theorem}\label{thm:cshapes_acyclic}
Let~$F$ be a complete alternating shape, then for any permutation~$\omega \in \fS_n$, all the acyclic pipe dreams in~$\Pi_F(\omega)$ are also strongly acyclic.
\end{theorem}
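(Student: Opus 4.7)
The plan is to prove the contrapositive: assuming $\econtact{P}$ contains a directed cycle, I will derive a cycle in $P\contact$, contradicting the acyclicity of~$P$. I would first set up a minimum cycle: take $C$ of minimum length in $\econtact{P}$, and among such, one with the fewest arcs in $\econtact{P} \setminus P\contact$ (``noninversion arcs''). Acyclicity of $P\contact$ ensures $C$ contains at least one noninversion arc $a \to b$, where $a < b$ and $\omega\pos(a) < \omega\pos(b)$; composing two consecutive noninversion arcs in $C$ would yield a shortcut, so noninversion arcs are isolated in~$C$. Writing the remainder as $b = c_1 \to c_2 \to \cdots \to c_{m-1} \to c_m = a$, shortening arguments restrict the intermediate pipes: whenever $(a, c_i)$ or $(c_i, b)$ is itself a noninversion of~$\omega$, the corresponding arc in $\econtact{P}$ shortcuts~$C$. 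A systematic case analysis on the label of $c_i$ relative to $a, b$ and on $\omega\pos(c_i)$ relative to $\omega\pos(a), \omega\pos(b)$ leaves only two surviving configurations: $c_i \leq a$ with $\omega\pos(c_i) \geq \omega\pos(b)$, or $c_i \geq b$ with $\omega\pos(c_i) \leq \omega\pos(a)$.

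Locating the ``jump'' arc $p \to q$ on $C$ where $p \geq b$ and $q \leq a$ --- necessarily a contact arc of $P\contact$ since noninversion arcs strictly increase labels, with the contact $p \to q$ ($p > q$) forcing the pipes to have crossed and giving $\omega\pos(p) < \omega\pos(q)$ --- one arrives in the non-degenerate case at $p > b$, $q < a$, and $\omega\pos(p) < \omega\pos(a) < \omega\pos(b) < \omega\pos(q)$. The triple $q < b < p$ is then mutually inverted under~$\omega$ and carries a contact between $p$ and~$q$, so \cref{lem:3_inv} applies. One of its two conclusions is $q \less_P b$ and $p \less_P b$, which combined with the $b$-to-$q$ sub-path of $C$ (lying entirely in $P\contact$) closes a cycle in $P\contact$, giving the contradiction.

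The main obstacle is the complementary conclusion $b \less_P q$ and $b \less_P p$ of \cref{lem:3_inv}, together with the degenerate 3-cycle cases where $p = b$ or $q = a$ causes the triple to collapse so that \cref{lem:3_inv} cannot be invoked directly. In these situations the structure of $C$ alone is insufficient, and completeness of~$F$ must be invoked. Using the existence of a pipe dream $P_0 \in \Pi_F(\omega_0)$ forced by completeness, I would identify a cell of~$F$ where pipes $a$ and $b$ would cross in $P_0$ and, via case~2 of \cref{lem:elbows_rectangle} applied to well-chosen elbows of~$P$, produce an additional path in $P\contact$ that either creates a cycle in $P\contact$ directly or yields a strictly shorter cycle in $\econtact{P}$ contradicting the minimality of~$C$. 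The main technical challenge is executing this geometric step cleanly; an alternative route would be induction on $|\ninv(\omega)|$, where the base $\omega = \omega_0$ is trivial since $\econtact{P} = P\contact$, and the inductive step uses completeness to construct $P' \in \Pi_F(\omega \cdot s_i)$ from $P$ by flipping a contact between pipes whose exit positions in $\omega$ are adjacent, after first establishing that such a contact must exist when $F$ is complete.
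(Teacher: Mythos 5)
There is a genuine gap, and it sits exactly where you defer to ``the main technical challenge.'' Your reduction---take a minimal cycle $C$ in~$\econtact{P}$, isolate the noninversion arcs, locate a jump arc and invoke \cref{lem:3_inv}---is sensible as far as it goes, although even there your parenthetical claim that the $b$-to-$q$ sub-path of~$C$ lies entirely in~$P\contact$ does not follow from your minimality conditions (a cycle of minimal length with fewest noninversion arcs can still carry a second noninversion arc between two vertices of your first configuration, and replacing it costs length, which your lexicographic minimality does not control). The real problem is that the case you cannot close---the other branch of \cref{lem:3_inv} and the degenerate triples---is precisely where completeness must do all the work, and your proposal only names the tool (``identify a cell where $a$ and $b$ would cross in $P_0$ and apply case~2 of \cref{lem:elbows_rectangle}'') without executing it. That sketch does not go through as stated: the cell where $a$ and $b$ would cross in some $P_0 \in \Pi_F(\omega_0)$ bears no controlled relation to where $a$ and $b$ actually have elbows in~$P$, and case~2 of \cref{lem:elbows_rectangle} requires both elbows to be sandwiched between two cells of~$F$, which is exactly what fails when one pipe travels strictly northwest of the other's zone.

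The paper's route is to prove the stronger \cref{thm:cshapes_contact_graph}: on a complete shape, \emph{every} noninversion $(p,q)$ of~$\omega$ is realized by a directed path from $p$ to $q$ in~$P\contact$, so that $\econtact{P}$ is a subgraph of the transitive closure of~$P\contact$ and the acyclicity claim is immediate. Establishing that statement is the entire content of the section: it needs \cref{lem:cshape_coords}, an inclusion--exclusion counting argument (\cref{lem:cshape_ninv}) showing that any cell of~$F$ strictly northwest of~$\zone{q}$ lies in the zone of a pipe with an elbow strictly southeast of it, \cref{lem:cshape_elbows}, and a double induction constructing a non-repeating auxiliary sequence of pipes $p = p_0, \ldots, p_N$ ending with an elbow in~$\zone{q}$. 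None of this is present in, or easily recoverable from, your proposal. Your alternative route by induction on~$|\ninv(\omega)|$ is likewise only named: converting a contact into a crossing to pass from~$\omega$ to~$\omega\tau_i$ is not a flip in the paper's sense (flips preserve the exit permutation), and both the existence of a suitable contact on a complete shape and the behaviour of acyclicity under that operation would themselves require proofs of comparable difficulty to the one you are trying to avoid.
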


This theorem is a direct consequence of the following stronger result that we will prove first.

\begin{theorem}\label{thm:cshapes_contact_graph}
Let~$F$ be a complete alternating shape and~$P$ be a reduced pipe dream on~$F$ with~$\omega$ its exit permutation. Then for any~$1 \leqslant p < q \leqslant n$, if~$(p,q) \in \ninv(\omega)$, there is a path from~$p$ to~$q$ in the contact graph~$P\contact$.
\end{theorem}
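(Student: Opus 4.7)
The plan is to apply \cref{lem:elbows_rectangle} to a judiciously chosen pair of elbows---one on pipe~$p$ and one on pipe~$q$---with the completeness of~$F$ ensuring the geometric conditions of the lemma are satisfied. The intuition is that sortability of~$\omega_0$ forces $F$ to be ``wide enough'' that the trajectories of~$p$ and~$q$ in~$P$ cannot avoid one another in the sense of \cref{lem:elbows_rectangle}: even though $p$ and $q$ do not cross in $P$, the shape contains enough cells near any would-be crossing point that the elbows they produce there must be related.

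First, I would use completeness to fix a reduced pipe dream $P_0 \in \Pi_F(\omega_0)$; in $P_0$ every pair of pipes crosses, so in particular pipes $p$ and $q$ share a crossing cell $c^* \in F$, and the neighborhood of $c^*$ inside $F$ is controlled by the fact that $P_0$ has to accommodate a cross there. Second, I would identify an elbow of pipe $p$ in $P$ lying weakly south-west of $c^*$ and an elbow of pipe $q$ in $P$ lying weakly north-east of $c^*$: their existence should follow from \cref{lem:start_end_coord} together with the fact that pipe $p$ starts and ends weakly north-west of pipe $q$ (since $(p,q)\in\ninv(\omega)$), so that each pipe is forced to turn inside $F$ in the appropriate region while respecting its starting/ending coordinates. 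Third, I would apply case~2 (or, in favorable configurations, case~3) of \cref{lem:elbows_rectangle} to these two elbows, taking the sandwich cells $(x_1,y_1)$ and $(x_2,y_2)$ from the neighborhood of $c^*$ that lies in~$F$; this yields the required directed path from $p$ to $q$ in~$P\contact$.

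The main obstacle I expect is the second step: since $P$ and $P_0$ are unrelated pipe dreams in general, and the trajectories of pipes $p$ and $q$ in $P$ depend on $\omega$ and on the local structure of $P$, it is not immediate that $P$ contains elbows of $p$ and $q$ in the right positions relative to $c^*$. Overcoming this will probably require either (i) defining $c^*$ directly in terms of $\zone{p} \cap \zone{q}$ inside $P$ itself, rather than transferring from $P_0$, and using \cref{rem:zones} to bound the relevant coordinates, or (ii) inducting on $q - p$: the base case $q = p+1$ would state that two consecutive-labeled noninversion pipes must share a direct contact in any reduced pipe dream on a complete shape, and the inductive step would build the path through pipe $p+1$, using \cref{lem:3_inv} to control the configuration when $(p, p+1)$ is itself an inversion of $\omega$. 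Either way, the essential content of the proof is showing that the completeness hypothesis translates into an inclusion of rectangles inside $F$ in the sense of \cref{lem:elbows_rectangle}, which is exactly the bridge between the combinatorial hypothesis $\omega_0 \in \Pi_F$ and the graph-theoretic conclusion about $P\contact$.
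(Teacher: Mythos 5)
There is a genuine gap, and it sits exactly where you predicted: your second step. Your plan is to apply \cref{lem:elbows_rectangle} \emph{once}, to a single well-chosen pair of elbows of $p$ and $q$. This works only when pipe $p$ has an elbow inside $\zone{q}$, in which case case~3 of \cref{lem:elbows_rectangle} applies directly (this is half of the paper's \cref{lem:cshape_elbows}). But for a noninversion $(p,q)$ it can equally happen that the relevant elbow of $p$ lies strictly northwest of $\zone{q}$ (pipe $p$ crosses $\horline{y_q^e}$ before $\vertline{x_q^s}$, i.e.\ it goes around the northwest corner of $\zone{q}$), and then no pair of sandwich cells is available: the rectangle between the two elbows can leave $F$, exactly as in the first configuration of \cref{fig:elbows_rectangle}. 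Neither of your rescue attempts addresses this. The crossing cell $c^*$ of $p$ and $q$ in $P_0$ carries no information about where $p$ and $q$ actually turn in $P$ --- the paper uses $P_0$ only to extract coordinate inequalities (\cref{lem:cshape_coords}, and inside \cref{lem:cshape_ninv}) --- and an induction on $q-p$ does not set up correctly, since the path from $p$ to $q$ may pass through pipes whose labels are not between $p$ and $q$; in particular your proposed base case (consecutive noninversion pipes share a \emph{direct} contact) is stronger than what is true. A smaller slip: you place the elbow of $p$ weakly southwest of $c^*$ and that of $q$ weakly northeast of it, which puts $p$'s elbow southwest of $q$'s, whereas \cref{lem:elbows_rectangle} requires $p$'s elbow to be weakly \emph{northwest} of $q$'s.

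The missing idea is a chain of intermediate pipes. The paper proves a counting statement (\cref{lem:cshape_ninv}): on a complete shape, for any cell $c$ of $F$ strictly northwest of $\zone{q}$, an inclusion--exclusion estimate (which is where completeness enters, via a pipe dream for $\omega_0$) shows there are more pipes whose zone contains $c$ than can pass entirely to the northwest of $c$, so some pipe $p'$ with $c \in \zone{p'}$ has an elbow strictly southeast of $c$ and forms a noninversion with $q$. Iterating this from the elbow of $p$ northwest of $\zone{q}$ produces a non-repeating sequence $p = p_0, p_1, \ldots, p_N$, each term linked to the previous ones in $P\contact$ (via \cref{lem:elbows_rectangle}, or via an outer induction on $q$ when the new pair is itself a noninversion), terminating with $p_N$ having an elbow in $\zone{q}$, at which point \cref{lem:cshape_elbows} closes the path to $q$. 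This intermediate-pipe mechanism is the part you must supply; the single-rectangle argument cannot be made to work.
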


We will start by giving a few properties of complete shapes in \cref{lem:cshape_coords}, then use them to see how pipes behave in pipe dreams on those shapes in \cref{lem:cshape_ninv,lem:cshape_elbows}. We will then use these last two lemmas to construct a path from~$p$ to~$q$ in the contact graph of a pipe dream on a complete shape, provided that~$(p,q)$ is a noninversion of the exit permutation.

\begin{lemma}\label{lem:cshape_coords}
Let~$F$ be a complete shape and~$P$ be any pipe dream on~$F$ with~$\omega$ its exit permutation, then:
\begin{compactenum}
\item for any pipe~$p$ of~$P$, $x_p^s \leqslant x_n^s < x_{\omega(1)}^e \leqslant x_{p}^e$;
\item for any pipe~$p$ of~$P$, $y_p^s \leqslant y_1^s < y_{\omega(n)}^e \leqslant y_p^e$;
\item $x_n^s \leqslant |\pstart{F}|_E \leqslant x_{\omega(1)}^e$;
\item $y_1^s \leqslant 0 \leqslant y_{\omega(n)}^e$;
\item $|\pstart{F}|_E \leqslant t_F+1$.
\end{compactenum}
\end{lemma}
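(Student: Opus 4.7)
The plan is to extract all five inequalities from a single use of the completeness hypothesis. By definition, there exists a pipe dream $P_0 \in \Pi_F(\omega_0)$. Because $\omega_0(1) = n$, pipe $n$ in $P_0$ is the one ending on the first step of $\pend{F}$, and because $\omega_0(n) = 1$, pipe $1$ in $P_0$ is the one ending on the last step of $\pend{F}$. These two pipes have the most extreme admissible trajectories, and their mere existence inside $F$ forces geometric constraints that we will then feed back into an arbitrary pipe dream $P$ with exit permutation $\omega$.

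Concretely, I would first compute, using \cref{lem:start_end_coord} and the description of $\pstart{F}$ and $\pend{F}$, the exact values of $(x_n^s, x_n^e)$ in $P_0$ as piecewise expressions in $|\pstart{F}|_E$ and $t_F$, according to the two binary choices: direction of the last step of $\pstart{F}$ and direction of the first step of $\pend{F}$. This gives four cases. Applying the inequality $x_n^s < x_n^e$ of \cref{rem:zones} to pipe $n$ of $P_0$ then yields, in each of the four cases, a sharp upper bound on $|\pstart{F}|_E$ in terms of $t_F$, the weakest of which is $|\pstart{F}|_E \le t_F + 1$; this is already item (5). A completely symmetric analysis on pipe $1$ of $P_0$, using $y_1^s < y_1^e$, yields the analogous sharp bounds on $|\pend{F}|_S$ in terms of $t_F$, including $|\pend{F}|_S \le t_F + 1$.

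Next, for the arbitrary pipe dream $P$ with exit permutation $\omega$, I observe that $x_n^s$, $x_{\omega(1)}^e$, $y_1^s$, and $y_{\omega(n)}^e$ in $P$ each depend only on one boundary direction of $F$ (via \cref{lem:start_end_coord}), not on the combinatorics of $P$ or $\omega$ themselves. Consequently, the middle inequalities in items (1)--(4) translate directly, in each case, to one of the sharp shape inequalities established in the previous paragraph, and each is verified in the corresponding combination of boundary directions. The outer inequalities $x_p^s \le x_n^s$ and $x_{\omega(1)}^e \le x_p^e$ in items (1) and (3), together with their $y$-analogues in items (2) and (4), are immediate from \cref{rem:zones}, as is the inequality $y_1^s \le 0$ (and $x_n^s \le |\pstart{F}|_E$) which needs no completeness.

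The main obstacle is simply being careful with the boundary cases. In particular, for the strict inequality $x_n^s < x_{\omega(1)}^e$ of item (1), the tight case is when the last step of $\pstart{F}$ is $S$ \emph{and} the first step of $\pend{F}$ is $S$, giving $x_n^s = |\pstart{F}|_E$ and $x_{\omega(1)}^e = t_F$; here the generic item (5) bound $|\pstart{F}|_E \le t_F + 1$ is not enough, and one must use the strictly sharper case bound $|\pstart{F}|_E < t_F$ produced by $P_0$ in that very configuration. Collecting the four cases (and their $y$-symmetric counterparts) finishes the proof with no additional ingredient beyond \cref{rem:zones}, \cref{lem:start_end_coord}, and the existence of $P_0$.
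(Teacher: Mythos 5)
Your proposal is correct and follows essentially the same route as the paper: both rest on the existence of some $P_0\in\Pi_F(\omega_0)$, the observation that pipe~$n$ (resp.~$1$) of~$P_0$ shares its starting coordinates with pipe~$n$ (resp.~$1$) of~$P$ and its ending coordinates with pipe~$\omega(1)$ (resp.~$\omega(n)$) of~$P$, the strict inequalities $x^s<x^e$, $y^s<y^e$ from \cref{rem:zones}, and the $\pm 1$ bounds relating these coordinates to $|\pstart{F}|_E$, $0$ and $t_F$. The only difference is bookkeeping: you derive the shape bounds first via a four-case analysis and feed them back into items (1)--(4), whereas the paper transfers $x_n^s<x_n^e$ from $P_0$ directly to $x_n^s<x_{\omega(1)}^e$ in $P$ and then reads off item (5); this is a reorganization, not a different argument.
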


\begin{proof}
By \ref{rem:zones} we know that~$x_p^s \leqslant x_n^s \leqslant |\pstart{F}|_E$ and~$t_F \leqslant x_{\omega(1)}^e \leqslant x_p^e$, as well as~$y_p^s \leqslant y_1^s \leqslant 0$ and~$y_{\omega(n)}^e \leqslant y_p^e$. We then note that for any pipe dream~$P_0 \in \Pi_F(\omega_0)$, the starting coordinates of pipes are the same in~$P$ and~$P_0$, and since~$\omega_0(1) = n$ and~$\omega_0(n) = 1$, the ending coordinates of~$1$ in~$P_0$ are the same as~$\omega(n)$ in~$P$ and the ones of~$n$ in~$P_0$ are the same as~$\omega_1$ in~$P$. We thus obtains that~$x_n^s < x_\omega(1)^e$ and~$y_1^s < y_{\omega(n)}^e$. The last relations are proved by noting that~$|\pstart{F}|_E - 1 \leqslant x_n^s \leqslant |\pstart{F}|_E$, that~$-1 \leqslant y_1^s \leqslant 0$ and that~$t_F \leqslant y_{\omega(1)}^e \leqslant t_F + 1$.
\end{proof}

In the following proofs, we will denote by~$\vertline{x}$ the vertical line of abscissa~$x$ and~$\horline{y}$ the horizontal line of ordinate~$y$.

\begin{lemma}\label{lem:cshape_ninv}
Let~$F$ be a complete alternating shape and~$P$ be a reduced pipe dream on~$F$ with exit permutation~$\omega$. For~$q$ any pipe of~$P$, if there exists a cell~$c$ of~$F$ strictly northwest of~$\zone{q}$, then there exists a pipe~$p$ with an elbow in a cell strictly southeast of~$c$ and such that~$c \in \zone{p}$, and~$(p,q)$ is a noninversion of~$\omega$.
\end{lemma}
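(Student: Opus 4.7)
The plan is to exploit the completeness of $F$ by comparing $P$ with a reference pipe dream $P_0 \in \Pi_F(\omega_0)$, and then identify the pipe $p$ by a careful choice based on the geometry of $c$ relative to $\zone{q}$. The key structural observation (Remark \ref{rem:zones}) is that the starting coordinates $(x_p^s, y_p^s)$ of any pipe $p$ depend only on the label $p$, not on the chosen pipe dream, so they coincide in $P$ and in $P_0$.

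I would first dispense with the noninversion condition as a geometric consequence of $c \in \zone{p}$. Unpacking "strictly NW of $\zone{q}$" gives $x_c < x_q^s$ and $y_c \geq y_q^e$, while $c \in \zone{p}$ gives $x_p^s \leq x_c$, $y_p^s \leq y_c$, $x_p^e \geq x_c + 1$ and $y_p^e \geq y_c + 1$. Combining $x_p^s \leq x_c < x_q^s$ with the monotonicity of starting $x$-coordinates in the pipe label forces $p < q$, and combining $y_p^e \geq y_c + 1 > y_q^e$ with the monotonicity of ending $y$-coordinates in the slot index forces $\omega\pos(p) < \omega\pos(q)$. Together these give $(p,q) \in \ninv(\omega)$. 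Note further that Lemma \ref{lem:cshape_coords} yields $x_p^e \geq x_{\omega(1)}^e \geq |\pstart{F}|_E > x_c$ automatically for \emph{every} pipe $p$, so only the $y$-ending condition is a genuine constraint on the choice.

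Next, I would locate the candidate $p$. Neither of the two pipes passing through $c$ in $P$ is viable, because any pipe traversing $c$ is confined to columns $\leq x_c$ before $c$ and to rows $\geq y_c$ after $c$, so it can have no elbow strictly southeast of $c$. Instead, I propose to let $p$ be a pipe whose zone contains $c$ but whose trajectory passes \emph{around} $c$ through its SE quadrant. Using completeness, in $P_0$ every pair of pipes crosses; in particular, among the pipes with starting coordinates weakly SW of $c$, I would isolate the one whose ending slot $\omega\pos(p)$ in $P$ is minimal subject to $y_p^e \geq y_c + 1$. Such a pipe exists because, by completeness, pipes ending sufficiently near the NW of $\pend{F}$ have their NE zone corners arbitrarily far north, and the starting constraints combined with $x_c < x_q^s \leq |\pstart{F}|_E$ and $y_c \geq y_q^e$ leave at least one admissible choice (witnessed by a pipe traversing $c$ in $P_0$, shifted to a NW-ending slot in $P$).

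Finally, I would prove that this $p$ has an elbow strictly SE of $c$ inside $\zone{p}$. The trajectory of $p$ from its SW starting cell to its NE ending cell lives inside $\zone{p}$. If $p$ does not pass through $c$, then because it must cross both column $x_c$ and row $y_c$ at some point, it must either go around $c$ from the NW (entering column $\leq x_c$ from below and ascending past row $y_c$ before crossing to column $> x_c$) or from the SE (crossing to column $> x_c$ while still below row $y_c$, then turning north). The NW bypass would put $p$ in cells blocking $q$'s trajectory from its start, contradicting $c$ being strictly NW of $\zone{q}$ together with the monotonicity of zones; this forces the SE bypass, in which $p$ has its characteristic SE elbow at the turn point, which sits in a cell strictly SE of $c$. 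The main obstacle I expect is precisely ruling out the NW bypass: this likely requires combining the definition of $\zone{q}$ with \cref{lem:elbows_rectangle} applied to $p$ and $q$, together with the completeness bounds of \cref{lem:cshape_coords}, to derive a contradiction from the wrong topological behavior.
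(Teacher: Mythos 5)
Your opening reduction is fine and matches the paper: unpacking $c\in\zone{p}$ together with \cref{lem:cshape_coords} does show that $(x,y)\in\zone{p}$ is equivalent to $x_p^s\leqslant x$ and $y_p^e>y$, and that any such $p$ automatically satisfies $(p,q)\in\ninv(\omega)$. The gap is in the second half. Your argument for why the chosen pipe must take the ``SE bypass'' does not work: a pipe $p$ with $c\in\zone{p}$ can perfectly well pass around $c$ to the northwest, i.e.\ cross the horizontal line $\horline{y_c}$ at some column $\leqslant x_c$ before crossing the vertical line $\vertline{x_c+1}$. This creates no conflict whatsoever with $q$, since such a pipe stays northwest of $\zone{q}$ in that region; there is nothing for it to ``block.'' So the claimed contradiction ruling out the NW bypass is false for an individual pipe, and your selection rule (minimal ending slot subject to $y_p^e\geqslant y_c+1$) if anything picks a pipe \emph{more} likely to go around $c$ on the NW side. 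The existence step is also asserted rather than proved.

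The paper's resolution is a pigeonhole count that you are missing. Writing $c=(x,y)$ and $X=\{p\mid x_p^s\leqslant x\}$, $Y=\{p\mid y_p^e>y\}$, one shows using the geometry of the starting path and the completeness hypothesis (via a pipe dream in $\Pi_F(\omega_0)$) that $|X|\geqslant i$ and $|Y|\geqslant n+1-j$ where $i,j$ index the boundary steps in columns $x$ and $y$ and satisfy $i-j\geqslant x-y$; since $q\notin X\cup Y$, inclusion--exclusion gives $|X\cap Y|\geqslant x-y+2$. Every pipe in $X\cap Y$ must cross both $\vertline{x+1}$ and $\horline{y}$, and those doing the NW bypass cross $\horline{y}$ inside $F$ strictly west of $(x+1,y)$ and east of $(y,y)$ --- at most $x+1-y$ of them, one per column. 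Since $|X\cap Y|>x-y+1$, at least one pipe crosses $\vertline{x+1}$ first and hence has an elbow strictly southeast of $c$. This counting step is the heart of the lemma and is what your proposal would need to supply.
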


\begin{proof}
We will proceed by counting the number of pipes that have the cell~$c$ in their area, and then we will see that not all of those pipe can pass to the NW of~$c$; therefore, at least one of them must have an elbow SE of that cell and thus be as described by the lemma.

Let us consider a pipe~$q$ of~$F$ and suppose that a cell~$c$ of~$F$ with coordinates~$(x,y)$ is strictly northwest of~$\zone{q}$, i.e.~$x_q^s > x \geqslant y \geqslant y_q^e$. From \cref{lem:cshape_coords} we deduce that any pipe~$p$ satisfies~$x < x_p^e$ and~$y > y_p^s$, so~$(x,y) \in \zone{p}$ if and only if~$x_p^s \leqslant x$ and~$y_p^e > y$. If we call~$X := \{ p \mid x_p^s \leqslant x \}$ and~$Y := \{ p \mid y_p^e > y \}$, this means that~$c \in \zone{p}$ if and only if~$p \in X \cap Y$. Moreover, if~$p \in X \cap Y$, then~$p$ starts strictly west of~$q$ and ends strictly north of~$q$, then~$(p,q)$ is necessarily a noninversion of~$\omega$. 

We know that~$0 \leqslant y_q^e < y \leqslant x < x_p^s$ so the south boundary of~$F$ in columns~$x$ and~$y$ are respectively the~$i$-th and~$j$-th step of~$\pstart{F}$ for some~$1 \leqslant j \leqslant i < p$. Since step~$i$ is the~$(x+1)$-th east step of~$\pstart{F}$ and step~$j$ the~$(y+1)$-th east step, we note that~$i-j \geqslant x - y$. Let us now consider~$P_0 \in \Pi_F(\omega_0)$, and note that according to \cref{lem:start_end_coord} the starting coordinates of pipe~$j$ are~$(y,y-j+1)$; moreover, since pipe~$j$ must cross all the pipes~$1,2, \ldots,j-1$ while going vertically somewhere in~$P_0$, it must go straight vertically through at least~$j-1$ rows, and since it does not end in column~$y$ (because~$y < t_F$) it must have at least one southeast elbow. Therefore, it passes through at least~$j$ different rows of~$F$, and so its ending ordinate is at least~$y+1$. Thus, for any pipe~$p$ of~$P$, if~$\omega\pos(p) \leqslant \omega_0\pos(j) = n+1-j$, then~$y_p^e \geqslant y+1$.

We just proved that~$p \leqslant i \Rightarrow p \in X$ and~$\omega\pos(p) \leqslant n+1-j \Rightarrow p \in Y$. This means that~$|X| \geqslant i$ and~$|Y| \geqslant n+1-j$, and since~$p \notin X \cup Y$, we also know that~$|X \cup Y| \leqslant n-1$. We can then apply the inclusion-exclusion principle to get
\begin{align*}
|X\cap Y| & \geqslant |X| + |Y| - |X \cup Y|\\
& \geqslant i + (n+1-j) - (n-1)\\
& \geqslant i - j + 2\\
|X\cap Y| &\geqslant x-y + 2
\end{align*}

Then each pipe in~$X$ starts west of the vertical line~$\vertline{x+1}$ and ends east of it (since~$x+1 \leqslant x_q^s < x_p^e$ for any~$p$), and each pipe in~$Y$ starts south of the horizontal line~$\horline{y}$ (because~$y \geqslant y_q^e > y_p^s$ for any~$p$) and ends north of it. It can thus cross either first the horizontal line then the vertical one, or the reverse. In the first case, it crosses~$\horline{y}$ west of point~$(x+1,y)$ and inside~$F$, so east of point~$(y,y)$; with one pipe crossing the line in each column, there are at most~$x+1-y$ such pipes. In the second case, the pipe crosses~$\vertline{x+1}$ going horizontally then~$\horline{y}$ going vertically, so there is at least one elbow between those two crossing, and that elbow is southeast of the cell~$c$. Since~$|X\cap Y| > x-y+1$, there is at least one pipe~$p \in X \cap Y$ in the second case, and that pipe satisfies the requirements of the lemma.
\end{proof}

\begin{figure}
\begin{center}
\begin{tikzpicture}[scale=0.3]
\fill[nearly transparent, blue] (9,1) rectangle (12,-2);
\fill[yellow] (6,2) rectangle (7,3);
\foreach \x in {0, ..., 9}
	\draw[boundaries] (\x, \x) -- (\x, \x+1) -- (\x+1, \x+1);
\draw[dashed, blue] (9, -2) node[below right] {$x_q^s$} -- (9, 9);
\draw[dashed, blue] (1,1) -- (12, 1) node[right] {$y_q^e$};
\node[below right] at (9,1) {$\zone{q}$};
\draw[dashed, red] (7, -2) node[below] {$x + 1$} -- (7, 7);
\draw[dashed, red] (2,2) -- (12, 2) node[right] {$y$};
\fill[red] (2,2) circle (5pt) node[above left] {$(y,y)$};
\fill[red] (7,7) circle (5pt) node[above left] {$(x+1,x+1)$};
\end{tikzpicture}
\end{center}
\caption{The part of $F$ north-west of cell $c$ (in yellow).}
\label{ill:cshape_ninv}
\end{figure}
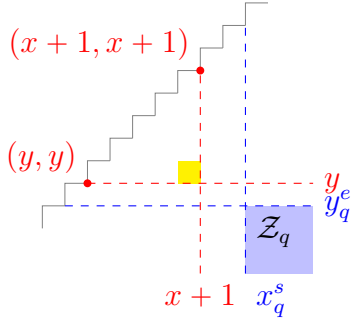

\begin{lemma}\label{lem:cshape_elbows}
Let~$F$ be a complete~alternating shape and~$P \in \Pi_F(\omega)$, and~$(p,q)$ be a noninversion of~$\omega$. Then:
\begin{compactenum}
\item pipe~$p$ has an elbow northwest of an elbow of pipe~$q$;
\item either one such elbow is strictly northwest of~$\zone{q}$, or it is in~$\zone{q}$ and there is a path from~$p$ to~$q$ in~$P\contact$.
\end{compactenum}
\end{lemma}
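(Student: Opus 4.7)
The setup: since $(p, q)$ is a noninversion of~$\omega$, pipes~$p$ and~$q$ do not cross in~$P$. Combined with $p < q$, $\omega\pos(p) < \omega\pos(q)$, and \cref{lem:start_end_coord}, this yields $x_p^s \leqslant x_q^s$, $y_p^s \geqslant y_q^s$, $x_p^e \leqslant x_q^e$, and $y_p^e \geqslant y_q^e$, so pipe~$p$'s trajectory lies weakly NW of pipe~$q$'s throughout. I would prove the two claims in succession.

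For part~1, the plan is to exhibit a specific pair of elbows witnessing the claim. A natural candidate is obtained by tracking the turning points of pipe~$q$: pick, for instance, an NW-elbow of~$q$ at a cell~$c_q$ (where $q$ switches from going~$E$ to going~$N$), and then follow pipe~$p$, which must itself turn somewhere weakly NW of~$c_q$ in order to end weakly NW of pipe~$q$'s end without crossing it. This gives an elbow $c_p$ of~$p$ weakly NW of $c_q$. Completeness of~$F$ (via \cref{lem:cshape_coords}) guarantees enough room in~$F$ for the required turns, and a symmetric argument handles the degenerate case where~$q$ has no NW-elbow (using SE-elbows instead, or pairing with the endpoints of~$p$ and~$q$ directly).

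For part~2, I would case-split on the position of the chosen elbow~$c_p$ relative to~$\zone{q}$. If~$c_p$ is strictly NW of~$\zone{q}$, the first alternative holds directly. If~$c_p$ lies in~$\zone{q}$, case~3 of \cref{lem:elbows_rectangle} immediately yields a directed path from~$p$ to~$q$ in~$P\contact$, giving the second alternative. The remaining case---$c_p$ sitting just north of~$\zone{q}$ (above its top row but within its column range) or just west (left of its western edge but within its row range)---requires a finer argument: by extending pipe~$p$'s staircase toward its end, which is weakly NW of pipe~$q$'s end, I would locate another elbow of~$p$ that must fall into one of the first two subcases.

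The main obstacle will be this third subcase of part~2: it requires carefully tracking how pipe~$p$ navigates around~$\zone{q}$, using the non-crossing constraint together with completeness and the alternating shape structure to rule out these ``in-between'' configurations. A secondary difficulty lies in part~1, where one must handle situations in which one of the pipes has few elbows; the coordinate bounds of \cref{lem:cshape_coords} should suffice to exclude the pathological cases where no suitable elbow of~$p$ exists.
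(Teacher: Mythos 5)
Your setup is right and you have correctly located the hard point, but the proposal leaves that hard point unresolved, so there is a genuine gap. Concretely: after choosing an elbow~$c_p$ of~$p$ weakly NW of an elbow of~$q$, you still have to handle the case where~$c_p$ lies strictly north of~$\zone{q}$ but not strictly west of it (or strictly west but not strictly north). You defer this to ``a finer argument'' obtained by ``extending pipe~$p$'s staircase toward its end,'' but no such argument is given, and as stated it points the wrong way: if~$c_p$ sits just north of~$\zone{q}$, pipe~$p$ has already crossed both critical lines there, and the elbow you need lies \emph{before}~$c_p$ along the pipe, not after it. Part~1 has a similar soft spot: it is not justified that~$p$ must turn weakly NW of an \emph{arbitrarily chosen} NW-elbow of~$q$ (pipe~$p$ may never reach that elbow's column), so your candidate pair is not established either.

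The paper removes both difficulties at once by choosing the elbow differently. Completeness gives, via \cref{lem:cshape_coords}, the strict inequalities~$x_q^s < x_p^e$ and~$y_p^s < y_q^e$ in addition to the weak ones you list, so pipe~$p$ must cross both the vertical line~$\vertline{x_q^s}$ and the horizontal line~$\horline{y_q^e}$; since it changes direction between these two crossings, it has an elbow~$(x,y)$ between them. If~$\horline{y_q^e}$ is crossed first, then~$x < x_q^s$ and~$y \geqslant y_q^e$, so~$(x,y)$ is strictly NW of~$\zone{q}$, and since~$q$ has an elbow inside~$\zone{q}$ (hence strictly SE of~$(x,y)$), part~1 follows. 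If~$\vertline{x_q^s}$ is crossed first, then~$x \geqslant x_q^s$ and~$y < y_q^e$, which together with~$x < x_p^e \leqslant x_q^e$ and~$y \geqslant y_p^s \geqslant y_q^s$ puts~$(x,y)$ inside~$\zone{q}$; one then exhibits an elbow of~$q$ weakly SE of~$(x,y)$ (pipe~$q$ stays SE of~$p$ throughout and must still turn) and applies case~3 of \cref{lem:elbows_rectangle}. For \emph{this} elbow the ``in-between'' configurations you worry about simply cannot occur, because between the two crossings the pipe is confined to the union of the two good regions. Replacing your arbitrary choice of~$c_p$ by this one is the missing idea.
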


\begin{proof}
From \cref{lem:cshape_coords} and our choice of~$p$ and~$q$, we know that~$x_p^s \leqslant x_q^s < x_p^e \leqslant x_q^e$ and~$y_q^s \leqslant y_p^s < y_q^e \leqslant y_p^e$. Pipe~$p$ must therefore cross the vertical line~$\vertline{x_q^s}$ and the horizontal line~$\horline{y_q^e}$ (we note that~$p$ cannot start vertically in column~$x_q^s$ or end horizontally in row~$y_q^e$); since it changes direction between those two crossings, it has an elbow in a cell~$(x,y)$ between them. Then:
\begin{itemize}
\item if pipe~$p$ crosses~$\horline{y_q^e}$ first and~$\vertline{x_q^s}$ second, then~$x < x_q^s$ and~$y \geqslant y_q^e$, so this elbow is strictly NW of~$\zone{q}$. Moreover, pipe~$q$ must cross~$\vertline{x_q^e}$ and~$\horline{y_q^s}$ and it changes direction between these crossings, so it has at least one elbow in~$\zone{q}$, and so that elbow is southwest of~$(x,y)$;
\item if pipe~$p$ crosses~$\vertline{x_q^s}$ first and~$\horline{y_q^e}$ second, then~$x_q^s \leqslant x$ and~$y_q^e > y$ so~$(x,y) \in \zone{q}$. Moreover, since~$x < x_p^e \leqslant x_q^e$ and~$y \geqslant y_q^s \geqslant y_p^s$, pipe~$p$ must cross the line~$\vertline{x+1}$ and~$\horline{y}$ at some point, so it has an elbow between thoese two crossing; since~$(p,q) \in \ninv(\omega)$, pipe~$q$ is southeast of pipe~$p$ on its whole trajectory, and so that elbow is southeast of the elbow of~$q$ in cell~$(x,y)$. In that case, we can apply case~$3$ of \cref{lem:elbows_rectangle} to obtain a path from~$p$ to~$q$ in~$P\contact$.
\end{itemize}
\end{proof}

\begin{proof}[Proof of \cref{thm:cshapes_contact_graph}]
We will find a path from~$p$ to~$q$ in~$P\contact$ by using \cref{lem:cshape_ninv} to prove that as long as~$p$ passes northwest of~$\zone{q}$, we can find~$p'$ such that there is a path from~$p$ to~$p'$ and~$(p',q)$ is also a noninversion of~$\omega$. This will create a sequence with its last element in~$\zone{q}$, allowing us to apply the second part of \ref{lem:cshape_elbows}.

Let~$P$ be a reduced pipe dream of~$F$ a complete shape with~$\omega \in \fS_n$ its exit permutation. We choose~$(p,q)$ a noninversion of~$\omega$ and we suppose that for any pipes~$1 \leqslant p' < q' < q \leqslant n$ with~$(p',q') \in \ninv(\omega)$, there is a path from~$p'$ to~$q'$ in~$P\contact$. We will prove that there is also a path from~$p$ to~$q$ in~$P\contact$. To do so, we will build a non-repeating sequence of pipes~$p = p_0, p_1, \ldots, p_N$ and a sequence of cells~$(x_0,y_0), \ldots, (x_{N-1}, y_{N-1})$ of~$F$ such that for all~$0 \leqslant k < N$:
\begin{compactenum}
\item cell~$(x_k, y_k)$ contains an elbow of~$p_k$;
\item $x_k < x_q^s$ and~$y_k \geqslant y_q^s$;
\item the pipe~$p_{k+1}$ has an elbow strictly SE of~$(x_k,y_k)$;
\item there is a path in~$P\contact$ from~$p$ to~$p_{k+1}$;
\item there exists~$l \leqslant k$ such that~$x_{p_{k+1}}^s \leqslant x_l$;
\item there exists~$l \leqslant k$ such that~$y_{p_{k+1}}^s > x_l$;
\end{compactenum}
and pipe~$p_N$ has an elbow in~$\zone{q}$.

We already chose~$p_0 = p$. Suppose now that we have the pipes~$p_0, \ldots, p_k$ and the cells~$(x_0,y_0), \ldots, (x_{k-1}, y_{k-1})$ as described. We already know that either~$k = 0$ and by hypothesis~$(p,q) \in \ninv(\omega)$, or there exists~$l_1,l_2 < k$ such that~$x_{p_k}^s \leqslant x_{l_1} < x_q^s$ and~$y_{p_k}^e > y_{l_2} \geqslant y_q^e$, so pipe~$p_k$ starts strictly west and ends strictly north of pipe~$q$, and so~$(p_k, q) \in \ninv(\omega)$. Therefore, by \cref{lem:cshape_elbows}, pipe~$p_k$ either has an elbow in~$\zone{q}$ and in that case we decide that~$k = N$ and stop the sequence, or it has at least one elbow strictly NW of~$\zone{q}$ in a cell that we denote by~$(x_k,y_k)$ (and thus statements 1 and 2 of our conditions on the sequences are still true).

In that case, let us now denote by~$x = \max_{0 \leqslant l \leqslant k}(x_l)$ and~$y = \min_{0 \leqslant l \leqslant k}(y_k)$, meaning that the cell~$(x,y)$ is the furthest northwest cell that is southeast of every~$(x_l, y_l)$ for~$0 \leqslant l \leqslant k$, as depicted in yellow in \cref{fig:cshapes_proof}. We know that~$x < x_q^s$ and~$y \geqslant y_q^s$, so we can apply \cref{lem:cshape_ninv} to obtain~$p_{k+1}$ with~$(x,y) \in \zone{p_{k+1}}$ and that has an elbow strictly southeast of~$(x,y)$ (and so strictly southeast of every~$(x_l, y_l)$ for~$0 \leqslant l \leqslant k$). This last proposition tells us that~$p_{k+1} \neq p_l$ for all~$0 \leqslant l \leqslant k$, since pipes go east and north and thus no pipe could go through two cells with one strictly SE of the other. Moreover, this choice of~$p_{k+1}$ guarantees that conditions 3, 5 and 6 on our sequences are true. A possible trajectory of pipe~$p_{k+1}$ relative to the elbows in cells~$(x_l, y_l)$ (in black) and to cell~$(x,y)$ (in yellow) is illustrated on the left side of \cref{fig:cshapes_proof}.

\begin{figure} 
\begin{center}
\includegraphics[scale=1]{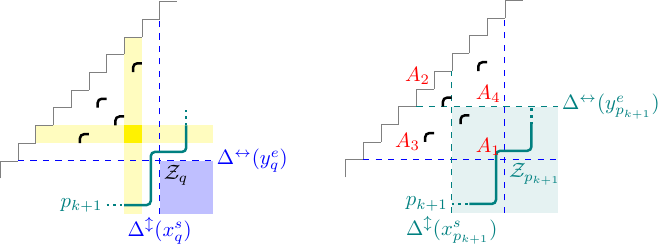}
\end{center}
\caption{Illustration of the placement of the cells $(x_l,y_l)$.}
\label{fig:cshapes_proof}
\end{figure}

Let us now divide the indices~$0 \leqslant l \leqslant k$ in four sets~$A_1, A_2, A_3, A_4$ depending on the placement of the cell~$(x_l, y_l)$ relative to~$\zone{p_{k+1}}$ as depicted on the right side of \cref{fig:cshapes_proof}: we say that~$l$ is in~$A_1$ if~$(x_l, y_l) \in \zone{p_{k+1}}$, in~$A_2$ if it is strictly NE of~$\zone{p_{k+1}}$, in~$A_3$ is it is strictly west but not north of~$\zone{p_{k+1}}$ and in~$A_4$ if it is strictly north but not west of~$\zone{p_{k+1}}$. At least one of those sets cannot be empty; one of the following cases must then be true:
\begin{itemize}
\item If~$A_1 \neq \emptyset$, then for any~$l \in A_1$ we have~$(x_l, y_l) \in \zone{p_{k+1}}$, and pipe~$p_{k+1}$ has an elbow strictly southeast of~$(x,y)$ and so strictly SE of the elbow of~$p_l$ in cell~$(x_l, y_l)$. We can thus apply statement 3 of \cref{lem:elbows_rectangle} to obtain a path from~$p_l$ to~$p_{k+1}$ in~$P\contact$, and since by induction hypothesis there is a path from~$p$ to~$p_l$ in~$P\contact$, this guarantees that condition 4 is true.
\item If~$A_2 \neq \emptyset$, then for any~$l \in A_2$ pipe~$p_l$ starts strictly west and ends strictly north of pipe~$p_{k+1}$ and thus~$(p_l, p_{k+1}) \in \ninv(\omega)$. Since we supposed at the begining that for any noninversion~$(p',q')$ of~$\omega$ such that~$q' < q$ there is a path from~$p'$ to~$q'$ in~$P\contact$, we know that there is a path from~$p_l$ to~$p_{k+1}$ in~$P\contact$, and so since by induction hypothesis there is a path from~$p$ to~$p_l$ condition 4 on our sequences is once again true.
\item Suppose now that~$A_1 = A_2 = \emptyset$. In that case, since there exist~$0 \leqslant l_3,l_4 \leqslant k$ with~$x_{l_4} = x \geqslant x_{p_{k+1}}^s$ and~$y_{l_3} = y < y_{p_{k+1}}^e$, we know that~$l_3 \in A_3$ and~$l_4\in A_4$.
\begin{compactitem}
\item Suppose now that~$0 \in A_3$ and consider~$m = \min(A_4) > 0$. Since~$m$ is in~$A_4$,  we know that~$y_{p_{k+1}}^e \leqslant x_m < x_{p_m}^e$, so~$\omega\pos(p_m) < \omega\pos(p_{k+1})$. Moreover, by our choice of~$m$, any~$l < m$ is in~$A_3$, so~$x_l < x_{p_{k+1}}$; by condition 5 on our sequences, this means that~$x_{p_m}^s < x_{p_{k+1}}^s$ so~$p_m < p_{k+1}$.
\item Similarly, if~$0 \in A_4$, we choose~$m = \min(A_3) > 0$. The same reasoning tells us that because~$m \in A_3$ then~$a_m < a_{k+1}$ and because of condition 6 applied to~$m$ then~$\omega\pos(a_m) < \omega\pos(a_{k+1})$.
\end{compactitem}
In both cases, we have~$m \leqslant k$ such that~$(p_m,p_{k+1})$ is a non-inversion of~$\omega$. As in the case where~$A_2 \neq \emptyset$, this means that there is a path from~$p_m$ to~$p_{k+1}$ in~$P\contact$ and so condition 4 is still true.
\end{itemize}

We have thus proved the existence of such a sequence, and since it is non-repeating and all elements are pipes of~$P$, it is finite. Therefore, this sequence has a last element~$p_N$ such that there is a path from~$p$ to~$p_N$ in~$P\contact$, and by our choice of an endpoint, we know that~$(p_N, q) \in \ninv(\omega)$ and~$p_N$ has an elbow in~$\zone{q}$. By \cref{lem:cshape_elbows}, this tells us that there is a path from~$p_N$ to~$q$ in~$P\contact$, and by transitivity there is a path from~$p$ to~$q$ in~$P\contact$.

This tells us that the set~$S = \{ (p,q) \in \ninv(\omega) \mid \text{there is no path } p \rightarrow^* q \text{ in } P\contact \}$ must be empty, since~$\min_{(p,q) \in S} (q)$ cannot exist. This conclude the proof.
\end{proof}

\begin{proof}[Proof of \cref{thm:cshapes_acyclic}]
A directed graph is acyclic if and only if its transitive closure is acyclic. For~$F$ a complete shape and~$\omega\in \fS_n$ a permutation, let us consider~$P \in \Pi_F(\omega)$; if it is acyclic, then by definition~$P\contact$ is acyclic and so is its transitive closure~$\overline{P\contact}$. We know from \cref{thm:cshapes_contact_graph} that for any~$(p,q) \in \ninv(\omega)$, the edge~$(p,q)$ is in~$\overline{P\contact}$, and so is any edge of~$P\contact$; therefore, the graph~$\econtact{P}$ is a subgraph of~$\overline{P\contact}$ and is also acyclic. Therefore, if~$P$ is acyclic, then it is also strongly acyclic.
\end{proof}

\bibliographystyle{plain}
\bibliography{biblio}

\end{document}